\newcommand{\Nn}{\mathbb{N}}
\newcommand{\D}{\mathbb{D}}
\newcommand{\T}{\mathbb{T}}
\newcommand{\B}{\mathcal{B}}
\newcommand{\Ho}{{\mathcal{H}}^1}
\newcommand{\Ht}{{\mathcal{H}}^2}
\newcommand{\Hp}{{\mathcal{H}}^p}
\newcommand{\Real}{\operatorname{Re}}
\newcommand{\Imag}{\operatorname{Im}}
\theoremstyle{plain}
\newtheorem{theorem}{Theorem}
\newtheorem{lemma}{Lemma}
\newtheorem{corollary}{Corollary}
\theoremstyle{remark}
\newtheorem*{question}{Question}
\newtheorem*{remark}{Remark}
\begin{document}
\title[The multiplicative Hilbert matrix]{The multiplicative Hilbert matrix}

\author[O. F. Brevig]{Ole Fredrik Brevig}
\address{Ole Fredrik Brevig\\Department of Mathematical Sciences \\ Norwegian University of Science and Technology \\
NO-7491 Trondheim \\ Norway}
\email{ole.brevig@math.ntnu.no}

\author[K.-M. Perfekt]{Karl-Mikael Perfekt}
\address{Karl-Mikael Perfekt\\Department of Mathematical Sciences \\ Norwegian University of Science and Technology \\
 NO-7491 Trondheim \\ Norway}
\email{karl-mikael.perfekt@math.ntnu.no}

 \author[K. Seip]{Kristian Seip}
\address{Kristian Seip\\Department of Mathematical Sciences \\ Norwegian University of Science and Technology \\
NO-7491 Trondheim \\ Norway}
\email{seip@math.ntnu.no}

\author[A. Siskakis]{Aristomenis G. Siskakis}
\address{Aristomenis G. Siskakis \\ Department of Mathematics \\ Aristotle University of Thessaloniki \\
54124, Thessaloniki \\ Greece}
\email{siskakis@math.auth.gr}

\author[D. Vukoti\'{c}]{Dragan Vukoti\'{c}}
\address{Dragan Vukoti\'{c} \\ Departamento de Matem\'{a}ticas \\ M\'{o}dulo 17 \\ Facultad de Ciencias \\
Universidad Aut\'{o}noma de Madrid \\ 28049 Madrid
\\ Spain}
\email{dragan.vukotic@uam.es}

\thanks{The research of the first and the third author is supported by Grant 227768 of the Research Council of Norway.
The fifth author's work was supported by MTM2015-65792-P  by MINECO, Spain and ERDF (FEDER). This research was initiated while the third author served as a one month visitor in the ``Postgraduate Excellence
Program'' at the Department of Mathematics at Universidad Aut\'{o}noma de Madrid.}
\subjclass[2010]{11M99, 42B30, 47B35, 47G10}


\begin{abstract}
	It is observed that the infinite matrix with entries $(\sqrt{mn}\log (mn))^{-1}$ for $m, n\ge 2$ appears as the matrix
of the integral operator $\mathbf{H}f(s):=\int_{1/2}^{+\infty}f(w)(\zeta(w+s)-1)dw$ with respect to the basis $(n^{-s})_{n\ge 2}$;
here $\zeta(s)$ is the Riemann zeta function and $\mathbf{H}$ is defined on the Hilbert space ${\mathcal H}^2_0$ of Dirichlet
series vanishing at $+\infty$ and with square-summable coefficients. This infinite matrix defines a multiplicative Hankel
operator according to Helson's terminology or, alternatively, it can be viewed as a bona fide (small) Hankel operator on the
infinite-dimensional torus $\mathbb{T}^{\infty}$. By analogy with the standard integral representation of the classical Hilbert matrix,
this matrix is referred to as the multiplicative Hilbert matrix. It is shown that its norm equals $\pi$ and that it has a purely
continuous spectrum which is the interval $[0,\pi]$; these results are in agreement with known facts about the classical Hilbert
matrix. It is shown that the matrix $(m^{1/p} n^{(p-1)/p}\log (mn))^{-1}$ has norm $\pi/\sin(\pi /p)$ when acting
on $\ell^p$ for $1<p<\infty$. However, the multiplicative Hilbert matrix fails to define a bounded operator
on ${\mathcal H}^p_0$ for $p\neq 2$, where ${\mathcal H}^p_0$ are $H^p$ spaces of Dirichlet series. It remains an
interesting problem to decide whether the analytic symbol $\sum_{n\ge 2} (\log n)^{-1} n^{-s-1/2}$ of the multiplicative
Hilbert matrix arises as the Riesz projection of a bounded function on the infinite-dimensional torus $\mathbb{T}^\infty$.
\end{abstract}

\maketitle

\section{Introduction}
The classical Hilbert matrix
\[A:=\left(\frac{1}{m+n+1}\right)_{m,n\ge 0}\]
is the prime example of an infinite Hankel matrix, i.e., a matrix whose entries $a_{m,n}$ only depend on the sum $m+n$. The Hilbert matrix can be viewed as the matrix of the integral operator
\begin{equation} \label{classical}
	\mathbf{H}_a f(z):=\int_{0}^1 f(t) (1-zt)^{-1} dt
\end{equation}
with respect to the standard basis $(z^n)_{n\ge 0}$ for the Hardy space $H^2(\D)$. This representation was first used by
Magnus \cite{M} who found that the Hilbert matrix has no eigenvalues and that its continuous spectrum is $[0,\pi ]$. It was also used in \cite{DS} and  \cite{DJV} to study the Hilbert matrix as an operator on Hardy and Bergman spaces of the disc and in particular to obtain its norm on those spaces.
\par 
The purpose of this paper is to identify and study a multiplicative analogue of $A$. This means that we seek an infinite matrix
with entries $a_{m,n}$ that depend only on the product $mn$ and with properties that parallel those of $A$. Our starting point
is the multiplicative counterpart to \eqref{classical} which we have found to be the integral operator
\begin{equation} \label{multH}
\mathbf{H} f(s) :=\int_{1/2}^{+\infty}f(w)(\zeta(w+s)-1)dw
\end{equation}
acting on Dirichlet series $f(s)=\sum_{n\geq2} a_n n^{-s}.$ Here $\zeta(s)$ denotes the Riemann zeta function, and we
assume that $f$ is in $\Ht_0$, which means that
$$
\|f\|_{\Ht_0}^2:= \sum_{n=2}^{\infty} |a_n|^2 < \infty.
$$
By the Cauchy--Schwarz inequality, every $f$ in $\Ht_0$ represents an analytic function in the half-plane
$\sigma=\Real s>1/2$. The same calculation shows that point evaluations $f \mapsto f(s)$ are bounded linear functionals on $\Ht_0$ for	$s$	in this half-plane. As is readily seen, the reproducing kernel $K_w$ of $\Ht_0$ is $K_{w}(s)=\zeta(s+\overline{w})-1$. This implies that
\begin{equation} \label{inner}
\langle \mathbf{H}f, g\rangle_{\Ht_0}=\int_{1/2}^\infty f(w) \overline{g(w)} dw
\end{equation}
when $f$ and $g$ are Dirichlet polynomials. Now observe that arc length measure on the half-line $(1/2, +\infty)$ is a Carleson measure for $\Ht_0$ (the contribution from $1/2<s<3/2$ is handled by \cite[Theorem~4]{OlS}, while the contribution from $s>3/2$ is handled by a pointwise estimates). We therefore get that \eqref{inner} in fact holds for arbitrary functions $f$ and $g$ in $\Ht_0$, and hence $\textbf{H}$ is well defined and bounded on $\Ht_0$. Taking into
account that every $f$ in $\Ht_0$ is analytic when $\sigma>1/2$, we find that $\langle \mathbf{H}f, f\rangle_{\Ht_0}=0$ if and
only if $f\equiv 0$. Hence \eqref{inner} also implies that $\mathbf{H}$ is a strictly positive operator.
 Now an explicit computation of the integral on the right-hand side of \eqref{multH} shows that the matrix of $\textbf{H}$
with respect to the orthonormal basis $(n^{-s})_{n\ge 2}$ is
\[M:= \left(\frac{1}{\sqrt{mn} \log(mn)} \right)_{m,n\ge 2}.\]
We will refer to this matrix as the multiplicative Hilbert matrix. We will be interested in understanding $M$ as an operator
on $\ell^2=\ell^2(\Nn\setminus\{1\})$, which means that, equivalently, we will be concerned with the properties of the
integral operator $\textbf{H}$ acting on $\Ht_0$.

Our main result reads as follows.
\begin{theorem}\label{basicbound}
	The operator $\mathbf{H}$ is a bounded and strictly positive operator on $\Ht_0$ with $\|\mathbf{H}\|=\pi$. It has no
eigenvalues, and its continuous spectrum is $[0,\pi]$.
\end{theorem}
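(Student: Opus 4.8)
The plan is to realize $\mathbf{H}$ as $J^\ast J$, where $J\colon \Ht_0 \to L^2(1/2,+\infty)$ is the restriction map $Jf = f|_{(1/2,+\infty)}$; identity \eqref{inner} reads precisely $\langle \mathbf{H}f, g\rangle = \langle Jf, Jg\rangle$, so $\mathbf{H} = J^\ast J$. Since the nonzero part of the spectrum is unchanged in passing from $J^\ast J$ to $JJ^\ast$, and $\|\mathbf{H}\| = \|J\|^2 = \|JJ^\ast\|$, I would analyze $JJ^\ast$ instead. After the shift $w = 1/2 + u$ identifying $L^2(1/2,+\infty)$ with $L^2(0,+\infty)$, a direct computation with the reproducing kernel shows that $JJ^\ast$ is the integral operator with kernel $\zeta(1+u+v)-1 = \sum_{n\ge 2} n^{-1-u-v}$ on $L^2(0,+\infty)$. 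The entire theorem thus reduces to spectral statements for this Hankel-type kernel (depending only on $u+v$), to be compared throughout with the classical Hilbert integral operator $T_0 f(u) = \int_0^\infty f(v)(u+v)^{-1}\,dv$.

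For the upper bound $\|\mathbf{H}\| \le \pi$ I would use the elementary pointwise domination $0 \le \zeta(1+t)-1 \le t^{-1}$, valid for $t>0$ by comparing $\sum_{n\ge 2} n^{-1-t}$ with $\int_1^\infty x^{-1-t}\,dx = t^{-1}$. As both kernels are positive, this yields $\langle JJ^\ast\phi,\phi\rangle \le \langle T_0|\phi|,|\phi|\rangle \le \pi\|\phi\|^2$, the final step being the classical Hilbert inequality (equivalently $\|T_0\| = \pi$, a classical result of Hardy, Littlewood and Pólya). Since $JJ^\ast$ is positive, $\|\mathbf{H}\| = \|JJ^\ast\| \le \pi$ and $\Spec(\mathbf{H}) \subseteq [0,\pi]$.

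To capture the full interval I would pass to logarithmic coordinates: the unitary $Wg(x) = e^{x/2}g(e^x)$ turns an operator with kernel $h(u+v)$ into one with kernel $e^{(x+y)/2}h(e^x+e^y)$ on $L^2(\RR)$. For $T_0$ this is convolution by $\tfrac{1}{2}\operatorname{sech}((x-y)/2)$, hence a Fourier multiplier with symbol $\pi/\cosh(\pi\xi)$, whose range is $(0,\pi]$; this recovers $\Spec(T_0) = [0,\pi]$. For $JJ^\ast$ one obtains $\widetilde k(x,y) = e^{(x+y)/2}(\zeta(1+e^x+e^y)-1)$, which, because $\zeta(1+t)-1 \sim t^{-1}$ as $t\to 0^+$, converges to $\tfrac{1}{2}\operatorname{sech}((x-y)/2)$ as $x,y\to-\infty$ with $x-y$ bounded, and decays to $0$ as $x,y\to+\infty$. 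I would then run a limit-operator (singular Weyl sequence) argument: for $\lambda \in (0,\pi]$ pick $\xi_0$ with $\pi/\cosh(\pi\xi_0) = \lambda$ and take modulated bumps $\varphi_j(x)e^{\im\xi_0 x}$ of growing width but centered at points $x_j\to-\infty$ fast enough that their supports escape to $-\infty$; these are approximate eigenvectors first for the convolution operator, and then for $JJ^\ast$ as well, since $\widetilde k$ is uniformly close to the limiting convolution kernel on such supports. This places $(0,\pi]$, hence the closed interval $[0,\pi]$, in the spectrum, so $\Spec(\mathbf{H}) = [0,\pi]$ and $\|\mathbf{H}\| = \pi$.

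It remains to exclude eigenvalues. Strict positivity, already established, removes $0$. The value $\pi$ is excluded because the domination above is strict: an eigenvector at $\pi$ would force $\langle T_0|\phi|,|\phi|\rangle = \pi\|\phi\|^2$, impossible as $T_0$ has no maximizer ($\pi$ is not in its point spectrum). The remaining and hardest point is to rule out eigenvalues in the open interval $(0,\pi)$. Here I would exploit the rigidity of the Hankel structure: from $JJ^\ast g = \lambda g$ one finds $g(v) = \lambda^{-1}\sum_{n\ge 2}\widehat g(\log n)\,n^{-1-v}$, with $\widehat g$ the Laplace transform of $g$, so that $\lambda\widehat g$ extends to a meromorphic Cauchy transform whose only (simple) poles lie in $\{-\log n : n\ge 2\}$; I would then argue that this analytic structure is incompatible with $\widehat g$ belonging to the Hardy space of the right half-plane (equivalently $g \in L^2(0,+\infty)$) unless $g\equiv 0$, in the spirit of Magnus's analysis of the classical Hilbert matrix. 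Making this rigidity rigorous is the main obstacle; an attractive alternative is to establish directly that the spectrum is purely absolutely continuous, for instance by recognizing $JJ^\ast$ as a perturbation of a self-adjoint Wiener--Hopf operator with continuous real symbol $\pi/\cosh(\pi\xi)$ and invoking the absolute continuity of the latter, though the perturbation would then have to be controlled carefully enough to preclude embedded eigenvalues.
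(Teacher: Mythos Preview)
Your factorization $\mathbf{H}=J^\ast J$ and the passage to $JJ^\ast$ on $L^2(0,\infty)$ is correct and elegant; the pointwise domination $\zeta(1+t)-1\le t^{-1}$ together with the classical Hilbert inequality gives $\|\mathbf{H}\|\le\pi$ cleanly, and the limit--operator (Weyl sequence) argument based on $\zeta(1+t)-1\sim t^{-1}$ near $t=0$ does place $(0,\pi]$ in the spectrum, so $\Spec(\mathbf{H})=[0,\pi]$ and $\|\mathbf{H}\|=\pi$. This part is a genuinely different route from the paper's: the paper computes the norm via an $\ell^p$ adaptation of the Hardy--Littlewood--P\'olya proof, and it shows $(0,\pi)\subset\Spec(\mathbf{H})$ not by Weyl sequences but by proving that $\mathbf{H}-\lambda$ fails to have full range (specifically, that $\psi$ is not in the range). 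Your approach is arguably more direct for these steps and isolates the Carleman operator $T_0$ as the relevant model operator from the outset.

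The gap is exactly where you flag it: ruling out eigenvalues in $(0,\pi)$. Neither of your two sketches is close to a proof. The ``rigidity'' argument via the Laplace transform has the right flavour---one does want to exploit analytic continuation---but nothing you wrote prevents an $L^2$ function $g$ from having a Laplace transform with simple poles at $-\log n$; indeed, any $g$ in the range of $J$ has exactly that structure, so additional input is needed. The perturbation idea is more problematic: $JJ^\ast-T_0$ is \emph{not} compact (its kernel $\zeta(1+u+v)-1-(u+v)^{-1}$ is bounded but not integrable enough), so Weyl--von~Neumann type results do not apply, and even if one could reach absolute continuity by other means, embedded eigenvalues would not automatically be excluded.

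What the paper actually does is specific and does not fall out of soft arguments. It shows, via a Mellin transform computation using $\zeta(1+t)-1=t^{-1}-K(t)$ with $K$ analytic, that any $f\in\Ht_0$ solving $(\mathbf{H}-\lambda)f=c\psi$ (the case $c=0$ being the eigenvalue equation) admits an explicit series expansion near $s=1/2$ in powers $(s-1/2)^{2k-1/2\pm i\theta}$; in particular $f'\in L^2(1/2,\infty)$ and $f(1/2)$ exists. Then the anti-commutation relation
\[
\mathbf{D}\mathbf{H}f=-f(1/2)\bigl(\zeta(s+1/2)-1\bigr)-\mathbf{H}\mathbf{D}f
\]
turns $(\mathbf{H}-\lambda)f=c\psi$ into $(\mathbf{H}+\lambda)f'=0$, which forces $f'\equiv 0$ (hence $f\equiv 0$) by strict positivity of $\mathbf{H}$. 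This single step disposes simultaneously of eigenvalues and of surjectivity of $\mathbf{H}-\lambda$. If you want to complete your approach, this anti-commutation trick transfers verbatim to $JJ^\ast$ on $L^2(0,\infty)$ (differentiate under the integral and integrate by parts), but you will still need an a priori regularity statement---$g'\in L^2$ near $0$ for any putative eigenfunction $g$---and that is precisely what the Mellin analysis supplies and what your proposal is missing.
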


This theorem, which is in agreement with what is known about the classical Hilbert matrix, should be seen as an outgrowth of
Helson's last two papers \cite{H3, H4}. In these works, a study of multiplicative Hankel matrices was initiated, mainly focused
on the question of to which extent Nehari's theorem \cite{N,P} extends to the multiplicative setting.
We will return to this interesting question in the final section of this paper. At this point, we just wish to point out that the
existence of a canonical operator like $\mathbf{H}$, closely related to the Riemann zeta function, clearly demonstrates that
multiplicative Hankel matrices may arise quite naturally.
\par 
The computation of the norm of $\textbf{H}$ is straightforward, by a simple adaption of the classical proof of
\cite[pp.~226--229]{HLP}. In fact, this adaption leads us to consider an $\ell^p$ version of the multiplicative Hilbert matrix
$M$, namely
\[
 M_p:=\left(\frac{1}{m^{(p-1)/p} n^{1/p} \log (mn)}\right)_{m,n\ge 2},
 \]
where $1<p<\infty$. We will see that $M_p$ has norm $\pi/\sin(\pi/p)$, viewed as an operator on $\ell^p$, which is analogous
to the classical fact that $A$ has norm $\pi/\sin(\pi/p)$ when it acts on $\ell^p$. We will explain this link in Section~\ref{bounded}. This result was actually first obtained by Mulholland \cite{Mulholland}, as a corollary to certain related integral estimates. 
\par 
The identification of the spectrum is the hardest part of  the proof of Theorem~\ref{basicbound}. Inspired by Magnus's work \cite{M},
it is split into two main parts. First, in Section~\ref{Mellin}, we establish estimates near the singular point $s=1/2$ for the anticipated
solutions $f$ to equations of the form
\[ (\textbf{H} -\lambda) f = c\cdot \psi, \]
where $c$ is a constant and $\psi$ is the analytic symbol of $\mathbf{H}$. This means that $\psi$ is the primitive of
$-(\zeta(s+1/2)-1)$ belonging to $\Ht_0$. The point of this estimation is to show that $f'(w)$ must be square integrable
on $(1/2,\infty)$.	Here we make use of the fact that $\zeta(s)-(s-1)^{-1}$ is an entire function, which allows us to
relate $\textbf{H}$ to a classical	 operator studied by Carleman. This analysis requires a fair amount of classical-type
computations involving Mellin transforms. In Section~\ref{spectrum}, we may then finish the proof by resorting to the following
commutation relation, obtained by integration by parts, between $\textbf{H}$ and the differentiation operator $\mathbf{D}$:
\[ \mathbf{D}\mathbf{H}f(s)=-f(1/2)(\zeta(s+1/2)-1)-\mathbf{H}\mathbf{D}f(s).\]

After finishing the proof of Theorem~\ref{basicbound}, we turn to two questions related to Helson's viewpoint, namely
that multiplicative Hankel operators are bona fide (small) Hankel operators on the infinite-dimensional torus $\T^\infty$.
The first question is whether there is a counterpart to the result of \cite{DS, DJV} saying that the norm of $\mathbf{H}_a$
viewed as an operator on $H^p(\D)$ is again $\pi/\sin(\pi/p)$. We will show in Section~\ref{lp} that the analogy with
$\textbf{H}_a$ breaks down at this point, or, more precisely, that $\textbf{H}$ does not extend to a bounded operator
on the $H^p$ analogues of $\Ht_0$, which by Bayart's work \cite{B} can be associated with $H^p(\T^\infty)$. This negative
result is related to, though not a trivial consequence of, the fact that $H^p(\T^\infty)$ is not complemented
in $L^p(\T^\infty)$ \cite{E}.

The final question to be discussed concerns the analytic symbol
\begin{equation} \label{symbol}
	\psi(s):=\sum_{n=2}^{\infty} \frac{n^{-s}}{\sqrt{n}\log n}
\end{equation}
of the multiplicative Hankel matrix. Since $-\psi$ is, up to a linear term, a primitive of the Riemann zeta function,	 it appears	
to be of interest to investigate it more closely. While it is known from \cite{OS} that Nehari's theorem does not hold in the
multiplicative setting, it could still be true that $\psi$ is the Riesz projection of a bounded function.	 In the final Section~\ref{nehari},
we will explain the exact meaning of this statement and show how this question relates to a long-standing embedding problem
for $H^p$ spaces of Dirichlet spaces.

A word on notation: Throughout this paper, the notation $U(z)\lesssim V(z)$ (or equivalently $V(z)\gtrsim U(z)$) means that
there is a constant
$C$ such that $U(z)\le CV(z)$ holds for all $z$ in the set in question, which may be a space of functions or a set of numbers.
We write $U(z)\simeq V(z)$ to signify that both $U(z)\lesssim V(z)$ and $V(z)\lesssim U(z)$ hold.

\section{The norm of the matrix $M_p$}\label{bounded}
In this section, $\| M_p\|_p$ will denote the norm of $M_p$ viewed as an operator on $\ell^p$. Our aim is to prove the
following theorem, which in particular shows that $\| \textbf{H} \| =\pi$.

\begin{theorem}\label{mpnorm}
We have $\| M_p \|_{p} =\pi/\sin(\pi /p)$ for $1<p<\infty$.
\end{theorem}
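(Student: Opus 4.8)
The plan is to prove the norm equality $\|M_p\|_p = \pi/\sin(\pi/p)$ by adapting the classical Schur-test argument found in \cite[pp.~226--229]{HLP}, exploiting the fact that the entries of $M_p$ depend on the product $mn$ through the kernel $K(m,n) = (\log mn)^{-1}$. The key observation is that $M_p$ arises from the homogeneous kernel $k(x,y) = (x+y)^{-1}$ of the classical Hilbert operator via the substitution $x = \log m$, $y = \log n$. Indeed, writing the entries as $(m^{(p-1)/p} n^{1/p})^{-1}(\log m + \log n)^{-1}$, one sees that the weights $m^{-(p-1)/p}$ and $n^{-1/p}$ are precisely the factors needed to balance the $\ell^p$ pairing, exactly mirroring the weights in the classical $\ell^p$ estimate for $A$.

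First I would establish the upper bound. For sequences $a = (a_m)_{m\ge 2}$ and $b = (b_n)_{n\ge 2}$ with $\|a\|_{p'} = \|b\|_p = 1$ (where $p'$ is the conjugate exponent), I would estimate the bilinear form
\[
\langle M_p b, a\rangle = \sum_{m,n\ge 2} \frac{a_m b_n}{m^{(p-1)/p} n^{1/p} \log(mn)}
\]
by inserting the classical Schur weights $(\log n / \log m)^{1/(pp')}$ and applying Hölder's inequality. The resulting diagonal sums are integral comparisons of the form $\sum_{n\ge 2} (\log n/\log m)^{1/(pp')}(mn)^{-\text{(power)}}(\log mn)^{-1}$, which after the substitution to logarithmic variables reduce to the beta-type integral $\int_0^\infty t^{-1/p}(1+t)^{-1}\,dt = \pi/\sin(\pi/p)$ that governs the classical bound. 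The monotonicity of the summands lets me dominate each sum by the corresponding integral, yielding $\|M_p\|_p \le \pi/\sin(\pi/p)$.

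For the lower bound, I would exhibit a near-extremal family of test sequences. Guided by the classical case, where the extremizers for the Hilbert operator in logarithmic variables are of the form $x^{-1/p}$ near the relevant scaling, I would take $a_m \simeq m^{-1/p}(\log m)^{-(1+\epsilon)/p'}$ and the analogous choice for $b_n$, depending on a small parameter $\epsilon > 0$. Computing $\langle M_p b, a\rangle / (\|a\|_{p'}\|b\|_p)$ with these choices and letting $\epsilon \to 0^+$, the sums again limit to the same beta integral, forcing $\|M_p\|_p \ge \pi/\sin(\pi/p)$. Together with the upper bound this gives the theorem, and specializing to $p = 2$ yields $\|M\|_2 = \|\mathbf{H}\| = \pi$.

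The main obstacle I anticipate is controlling the error terms introduced by the discrete-to-continuous passage near the boundary index $n = 2$, since the kernel $(\log mn)^{-1}$ is most sensitive precisely where $\log m$ and $\log n$ are smallest, and the naive integral comparison is least accurate there. In the classical Hilbert matrix the summation starts at $0$ and the homogeneity is exact, whereas here the shift in the logarithm and the truncation at $n\ge 2$ break exact homogeneity; I would handle this by splitting off the finitely many small-index terms and estimating them crudely, showing their contribution is negligible in the $\epsilon \to 0$ limit for the lower bound and absorbable into the Schur estimate for the upper bound.
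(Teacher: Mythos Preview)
Your approach is the paper's: the Schur/H\"older test with weights $(\log m/\log n)^{\pm 1/(pp')}$ for the upper bound, and logarithmically damped test sequences for the lower bound, both reducing to the beta integral $\int_0^\infty t^{-1/p}(1+t)^{-1}\,dt=\pi/\sin(\pi/p)$, exactly as in \cite[pp.~226--234]{HLP}. The boundary concerns near $n=2$ are handled just as you anticipate.

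There is one slip worth fixing. With your duality pairing $\langle M_p b,a\rangle$, $a\in\ell^{p'}$, $b\in\ell^p$, the choice $a_m\simeq m^{-1/p}(\log m)^{-(1+\epsilon)/p'}$ does \emph{not} lie in $\ell^{p'}$ when $p>2$, since $\sum_m m^{-p'/p}(\log m)^{-(1+\epsilon)}$ diverges; and for $p<2$ its $\ell^{p'}$-norm stays bounded as $\epsilon\to 0$, which kills the lower-bound argument. The exponent on $m$ must match the space so that the relevant norm becomes $\sum m^{-1}(\log m)^{-(1+\epsilon)}\sim 1/\epsilon$. The paper arranges this by assigning the $\ell^p$ sequence $a_m=m^{-1/p}(\log m)^{-(1+\varepsilon)/p}$ to the row index (against the weight $m^{-1/q}$) and the $\ell^q$ sequence $b_n=n^{-1/q}(\log n)^{-(1+\varepsilon)/q}$ to the column index (against $n^{-1/p}$); then $\|a\|_p^p\sim\|b\|_q^q\sim 1/\varepsilon$ and the bilinear form collapses to
\[
\sum_{m,n\ge 2} m^{-1}n^{-1}(\log m)^{-(1+\varepsilon)/p}(\log n)^{-(1+\varepsilon)/q}(\log mn)^{-1}
\ge \int_{\log 3}^\infty\!\!\int_{\log 3}^\infty x^{-(1+\varepsilon)/p}y^{-(1+\varepsilon)/q}(x+y)^{-1}\,dx\,dy,
\]
which gives the desired $(1/\varepsilon)(\pi/\sin(\pi/p)+o(1))$. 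With this correction your outline goes through verbatim.
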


\begin{proof}
The proof relies, as in \cite[pp.~226--234]{HLP}, on the following homogeneity property of the kernel $(x+y)^{-1}$:
\begin{equation}\label{homo}
 \int_{0}^{\infty} x^{-1/p}\frac{1}{1+x}  dx = \int_{0}^{\infty}x^{-(p-1)/p} \frac{1}{1+x}	 dx=\frac{\pi}{\sin(\pi /p)}.
 \end{equation}
The exact computation of the integral can be found in \cite[p.~254, Example~4]{WW} or \cite[Section~9.5]{D}.
\par 
We prove first that $\| M_p \|_{p} \le \pi/\sin(\pi /p)$. We write $q=p/(p-1)$ and assume that
$(a_m)_{m\geq2}$ is in $\ell^p$ and $(b_n)_{n\geq2}$ is in $\ell^q$. By H\"{o}lder's inequality,
we find that
\[ \sum_{m, n = 2}^\infty |a_m| |b_n| m^{-1/q} n^{-1/p} (\log(mn))^{-1}\le P\cdot Q, \]
where
\begin{equation}\label{PP}
P:= \left(\sum_{m=2}^\infty |a_m|^p\sum_{n\ge 2} n^{-1}
\left (\frac{\log m}{\log n}\right)^{1/q}\frac{1}{\log(mn)}\right)^{1/p}
\end{equation}
and
\begin{equation}\label{QQ}
Q:= \left(\sum_{n=2}^\infty |b_n|^q\sum_{m\ge 2} m^{-1}
\left (\frac{\log n}{\log m}\right)^{1/p}\frac{1}{\log(mn)}\right)^{1/q}.
\end{equation}
By a change of variables argument, each of the inner sums is dominated by the integral in \eqref{homo}, and
hence we obtain the desired bound by duality.

To prove that the norm is bounded below by $\pi/\sin(\pi /p)$, we use the sequences defined by
\[
a_m=m^{-1/p}(\log m)^{-(1+\varepsilon)/p} \qquad \text{and} \qquad
b_{n}=n^{-1/q} (\log n)^{-(1+\varepsilon)/q}
 \]
for which we have
\begin{equation} \label{norms}
\left \| (a_{m})\right\|_p^p=\frac{1}{\varepsilon}+O(1) \qquad \text{and}
\qquad \left\| (b_{n})\right\|_q^q=\frac{1}{\varepsilon}+O(1)
\end{equation}
when $\varepsilon\to 0^+$. We see that
\begin{align*} \sum_{m, n= 2}^\infty a_{m} b_{n} m^{-1/q} n^{-1/p} \frac{1}{\log(mn)}
& = \sum_{m, n= 2}^\infty (\log m)^{-(1+\varepsilon)/p} (\log n)^{-(1+\varepsilon)/q} m^{-1} n^{-1} \frac{1}{\log(mn)} \\
& \ge \int_{\log 3}^{\infty}\int_{\log 3}^{\infty} x^{-(1+\varepsilon)/p}y^{-(1+\varepsilon)/q}\frac{1}{x+y} dxdy.\end{align*}
This iterated integral can computed as the corresponding integral in \cite[p.~233, Equation~9.5.2]{HLP} so that we get
\[
\sum_{m, n= 2}^\infty a_{m} b_{n} m^{-1/q} n^{-1/p} =
\frac{1}{\varepsilon} \left(\frac{\pi}{\sin (\pi /p)} + o(1) \right)
\]
when $\varepsilon\to 0^+$. Combining this estimate with \eqref{norms}, we get the desired bound
$\| M_p\|_p\ge \pi/\sin(\pi/ p)$.
\end{proof}
\par 
It is of interest to observe that when we replace the inner sums in \eqref{PP} and \eqref{QQ} by the respective integrals in \eqref{homo},
we get a strict inequality. In particular, we get that
\[\| \textbf{H} f \|_{\Ht_0} < \pi \| f \|_{\Ht_0}\]
for every nontrivial function $f$ in $\Ht_0$. This means that we have already shown that $\pi$ is not an eigenvalue for $\textbf{H}$.
\par 
Another observation is that the matrix $M_p$ fails to be bounded on $\ell^{p'}$ when $p'\neq p$. This is most easily seen when $p'>p$ because we can find a sequence $a$ in $\ell^{p'}$ for which the entries in $M_{p}a$ become infinite.
When $p'<p$, we can apply the same argument to the conjugate exponents $q$ and $q'$ and the matrix $M_q$.
\par 
In preparation for the proof of the second part of Theorem~\ref{basicbound}, we now clarify the relationship
between $\mathcal{H}^2_0$ and $L^2(1/2,\,\infty)$ implied by Theorem~\ref{mpnorm}.

\begin{corollary} \label{cor:L2emb}
If $f$ is in $\mathcal{H}^2_0$, then $\|f\|_{L^2(1/2,\,\infty)} \leq \sqrt{\pi} \|f\|_{\mathcal{H}^2_0}$. Additionally, $\mathbf{H}$
extends to an operator from $L^2(1/2,\,\infty)$ to $\mathcal{H}^2_0$
and $\|\mathbf{H}f\|_{\mathcal{H}^2_0}\leq\sqrt{\pi}\|f\|_{L^2(1/2,\,\infty)}$.
\end{corollary}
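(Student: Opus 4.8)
The plan is to realize $\mathbf{H}$ as $J^*J$, where $J\colon \Ht_0 \to L^2(1/2,\,\infty)$ is the restriction (embedding) map, and then to read off both assertions from the boundedness of $J$ together with the identification of its adjoint $J^*$ with the integral operator \eqref{multH}.

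For the first assertion I would set $g=f$ in the identity \eqref{inner}, which holds for all $f\in\Ht_0$ by the Carleson measure argument recorded in the introduction. This gives $\|f\|_{L^2(1/2,\,\infty)}^2 = \langle \mathbf{H}f, f\rangle_{\Ht_0}$. Applying the Cauchy--Schwarz inequality and the bound $\|\mathbf{H}\|=\pi$ from Theorem~\ref{mpnorm} yields $\|f\|_{L^2(1/2,\,\infty)}^2 \le \|\mathbf{H}f\|_{\Ht_0}\|f\|_{\Ht_0} \le \pi\|f\|_{\Ht_0}^2$, which is precisely the claimed embedding and shows $\|J\|\le\sqrt{\pi}$. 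In particular $J$ is bounded, so its Hilbert space adjoint $J^*\colon L^2(1/2,\,\infty)\to\Ht_0$ is well defined and satisfies $\|J^*\|=\|J\|\le\sqrt{\pi}$.

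For the second assertion I would verify that $J^*$ is exactly the extension of $\mathbf{H}$ to $L^2(1/2,\,\infty)$ furnished by \eqref{multH}. Given $h\in L^2(1/2,\,\infty)$, the reproducing property of $\Ht_0$ and the definition of the adjoint give, for every $s$ with $\sigma>1/2$,
\[
J^*h(s) = \langle J^*h, K_s\rangle_{\Ht_0} = \langle h, J K_s\rangle_{L^2(1/2,\,\infty)} = \int_{1/2}^\infty h(w)\,\overline{K_s(w)}\,dw.
\]
Since $K_s(w)=\zeta(w+\overline{s})-1$ and $w$ is real, one has $\overline{K_s(w)}=\zeta(w+s)-1$, so the right-hand side equals $\int_{1/2}^\infty h(w)(\zeta(w+s)-1)\,dw$, which is $\mathbf{H}h$ as in \eqref{multH}. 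The integral converges absolutely because $K_s\in\Ht_0$ embeds into $L^2(1/2,\,\infty)$ by the first part, so $\overline{K_s}\in L^2(1/2,\,\infty)$ and Cauchy--Schwarz applies. Hence $\mathbf{H}h=J^*h$ for all $h\in L^2(1/2,\,\infty)$, and the estimate $\|\mathbf{H}h\|_{\Ht_0}\le\sqrt{\pi}\,\|h\|_{L^2(1/2,\,\infty)}$ is just the bound $\|J^*\|\le\sqrt{\pi}$ already obtained.

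The only step demanding genuine care, rather than formal Hilbert space bookkeeping, is the identification in the last paragraph: one must check the conjugation identity $\overline{K_s(w)}=\zeta(w+s)-1$ on the real line and confirm that the integral formula \eqref{multH} reproduces $J^*$ on \emph{all} of $L^2(1/2,\,\infty)$, not merely on the dense subspace arising from $\Ht_0$. Everything else is a consequence of the factorization $\mathbf{H}=J^*J$ and the elementary fact that an operator and its adjoint have equal norm.
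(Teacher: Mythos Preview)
Your argument is correct and follows essentially the same line as the paper's proof. The first part is identical: both set $g=f$ in \eqref{inner} and invoke $\|\mathbf{H}\|=\pi$. For the second part, the paper verifies \eqref{inner} for $f\in L^2(1/2,\,\infty)$ and Dirichlet polynomials $g$ by Fubini and then concludes by duality, whereas you identify $\mathbf{H}$ on $L^2(1/2,\,\infty)$ with the adjoint $J^*$ via the reproducing kernel and use $\|J^*\|=\|J\|$; these are two equivalent ways of expressing the factorization $\mathbf{H}=J^*J$, and neither requires any idea beyond what the other uses.
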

\begin{proof}
	The first statement follows from Theorem~\ref{mpnorm} with $p=2$ and the fact that
	\[\langle \mathbf{H}f,f\rangle_{\mathcal{H}^2_0} = \int_{1/2}^{+\infty} |f(w)|^2\,dw.\]
	Given $f \in L^2(1/2,\,\infty)$, clearly $\mathbf{H}f$ is a Dirichlet series vanishing at $+\infty$. If
$g(s) = \sum_{n\geq2} b_n n^{-s}$, it follows from Fubini's theorem that
\[
\langle \mathbf{H}f,g\rangle_{\mathcal{H}^2_0} = \sum_{n=2}^\infty \left(\int_{1/2}^\infty f(w)n^{-w}\,dw\right)\overline{b_n}
=  \int_{1/2}^\infty f(w)\overline{g(w)}\,dw,
\]
so that \eqref{inner} extends to hold for $f \in L^2(1/2,\,\infty)$ and Dirichlet polynomials $g$. The second statement now
follows from the first, since
\[
\|\mathbf{H}f\|_{\mathcal{H}^2_0} = \sup_{\|g\|_{\mathcal{H}^2_0}
=1} \left|\langle \mathbf{H}f, g \rangle_{\mathcal{H}^2_0}\right|
\leq\sup_{\|g\|_{\mathcal{H}^2_0}=1}\|f\|_{L^2(1/2,\,\infty)}\|g\|_{L^2(1/2,\,\infty)} \leq \sqrt{\pi} \|f\|_{L^2(1/2,\,\infty)}. \qedhere
\]	
\end{proof}

\section{Estimates for solutions of $(\mathbf{H}-\lambda)f=c\psi$}\label{Mellin}
In preparation for the characterization of the spectrum of $\mathbf{H}$, we will in this section prove precise asymptotics
as $s \to 1/2$ for solutions $f$	in $\mathcal{H}_0^2$ of the equation $(\mathbf{H}-\lambda)f=c\psi$, where $c$ is a constant
and $\psi$ is the analytic symbol of $\mathbf{H}$ defined by \eqref{symbol}.
 The considerations to come are in fact of a rather general nature, providing a spectral decomposition of $f$ in terms of
 generalized eigenvectors of the (shifted) Carleman operator \cite[p. 169]{C} defined by
\[\mathbf{C}f(s) = \int_{1/2}^\infty \frac{f(w)}{s+w - 1}\,dw, \qquad s > 1/2.\]
We choose to focus on $\mathbf{H}$ for simplicity, but it will be clear from the proof of the next theorem that minor modifications
yield similar results for other integral operators whose kernels are perturbations $K(s+w)$, $K$ analytic, of the Carleman kernel.
\begin{theorem} \label{thm:rep}
Suppose that $0 < \lambda < \pi$, and let $\psi$ denote the analytic symbol of \ $\mathbf{H}$, that is
$$\psi(s) = \sum_{n=2}^\infty \frac{1}{\sqrt{n} \log n} n^{-s}, \qquad \Real s > 1/2.$$ If $f$ in $ \mathcal{H}_0^2$ satisfies	
$(\mathbf{H}-\lambda)f=c\psi$, then there exists a complex number $d$ and polynomially bounded sequences of complex numbers
$(c_k)_{k\geq1}$ and $ (d_k)_{k\geq1}$ such that $f$ has the series representation
\begin{equation} \label{eq:thmrep}
f(s) = cd + \sum_{k=1}^\infty (s-1/2)^{2k-1/2}\left( c_{k}(s-1/2)^{-i\theta} + d_{k}(s-1/2)^{i\theta}\right), \qquad 1/2 < s < 3/2,
\end{equation}
where $\theta$ is a real number dependent on $\lambda$, namely
\[
\theta = \frac{1}{\pi}\log\left(\frac{\pi}{\lambda}-\sqrt{\left(\frac{\pi}{\lambda}\right)^2-1}\right).
\]
In particular, if $f$	in $\mathcal{H}^2_0$ solves $(\mathbf{H}-\lambda)f=c\psi$ then $f'\in L^2(1/2,\,\infty)$.
\end{theorem}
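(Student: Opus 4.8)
The plan is to isolate the Carleman part of $\mathbf{H}$ and to diagonalize it by the Mellin transform. Writing $\zeta(z)-1=(z-1)^{-1}+(E(z)-1)$ with $E(z)=\zeta(z)-(z-1)^{-1}$ entire, I would split $\mathbf{H}=\mathbf{C}+\mathbf{R}$, where $\mathbf{C}$ is the shifted Carleman operator and $\mathbf{R}$ has the entire kernel $E(s+w)-1$. The equation then becomes $(\mathbf{C}-\lambda)f=g$, where $g:=c\psi-\mathbf{R}f$. First I would record the singular structure of $g$ near $s=1/2$: since $\psi'(s)=-(\zeta(s+1/2)-1)=-(s-1/2)^{-1}-(E(s+1/2)-1)$, integration gives $\psi(s)=-\log(s-1/2)+\phi(s)$ with $\phi$ analytic (in fact entire, as $\zeta$ is meromorphic with its only pole at $1$). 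Since $E(s+w)-1\to 0$ as $w\to+\infty$ and $f\in L^2(1/2,\infty)$ by Corollary~\ref{cor:L2emb}, the term $\mathbf{R}f$ is analytic near $s=1/2$, hence with geometrically decaying Taylor coefficients. Thus $g(s)=-c\log(s-1/2)+(\text{analytic})$ near $s=1/2$, and $f\in L^2(1/2,\infty)$.

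Next I would pass to the variable $x=s-1/2>0$ and apply the unitary Mellin transform on $L^2(0,\infty)$, under which $\mathbf{C}$ becomes multiplication by $M(\tau)=\pi/\cosh(\pi\tau)$; this is the content of the generalized eigenvalue relation $\mathbf{C}[x^{-1/2+i\tau}]=M(\tau)x^{-1/2+i\tau}$, obtained from $\int_0^\infty u^{-1/2+i\tau}(1+u)^{-1}\,du=\pi/\cosh(\pi\tau)$. The equation becomes $(M(\tau)-\lambda)\tilde f(\tau)=\tilde g(\tau)$, and $M(\tau)-\lambda$ has simple real zeros exactly at $\tau=\pm\theta$, where $\cosh(\pi\theta)=\pi/\lambda$; solving for $\theta$ reproduces the formula in the statement. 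Because $f\in L^2$ forces $\tilde f\in L^2(\RR)$, the numerator must kill these zeros, i.e. $\tilde g(\pm\theta)=0$; this solvability condition is precisely what removes the non-$L^2$ resonant modes $x^{-1/2\pm i\theta}$ and explains why the expansion \eqref{eq:thmrep} starts at $k=1$ rather than at $k=0$.

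The representation then follows by Mellin inversion and a contour shift toward $-i\infty$, collecting residues of $\tilde f=\tilde g/(M-\lambda)$. Two families of poles contribute. The $2i$-periodic zeros of $M-\lambda$ at $\tau=\pm\theta-2ik$, $k\geq1$, produce the terms $x^{2k-1/2\pm i\theta}$, matching the $c_k,d_k$ in \eqref{eq:thmrep}. On the lower imaginary axis $M$ itself has poles at $\tau=-i(m+1/2)$, so $1/(M-\lambda)$ vanishes there and cancels the simple poles of $\tilde g$ coming from the Taylor part of $g$; the sole survivor is the double pole of $\tilde g$ at $\tau=-i/2$ created by the $\log$, which against the simple zero of $1/(M-\lambda)$ leaves a simple pole whose residue is the constant term. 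Since this residue is proportional to $c$, it has the form $cd$. A convenient feature is that $\cosh$ and $\sinh$ are constant up to sign along $\pm\theta-2ik$ by periodicity, so $M'$ takes a single value there; the size of $c_k,d_k$ is then governed by $\tilde g(\pm\theta-2ik)$, which decays like $k^{-2}$ from the $\log$ part and geometrically from the analytic part, yielding the claimed polynomial boundedness and convergence of the series for $x<1$, that is for $1/2<s<3/2$.

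The main obstacle I anticipate is the analytic bookkeeping behind the contour shift: establishing the meromorphic continuation of $\tilde g$ and $\tilde f$ together with uniform decay on the horizontal segments that justifies pushing the contour to $-i\infty$ and summing the residue series, and quantifying $\tilde g$ along the poles $\pm\theta-2ik$ to secure the polynomial bounds. Granting the representation, the final assertion $f'\in L^2(1/2,\infty)$ is immediate: differentiating \eqref{eq:thmrep} term by term shows $f'(s)\sim(s-1/2)^{1/2\pm i\theta}$ as $s\to 1/2^+$, so $|f'|^2$ is integrable near $1/2$, while on $(3/2,\infty)$ the derivative $f'(s)=-\sum_{n\geq2}a_n\log n\,n^{-s}$ decays like $2^{-s}$ and is trivially square integrable; combining the two regions gives $f'\in L^2(1/2,\infty)$.
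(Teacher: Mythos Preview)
Your proposal is correct and follows essentially the same route as the paper: split off the Carleman part of $\mathbf{H}$, apply the Mellin transform (the paper uses the holomorphic version $\mathcal{M}f(z)=\int_0^\infty s^{z-1}f(s)\,ds$, related to yours by $z=1/2-i\tau$) to turn $\mathbf{C}$ into multiplication by $\pi/\sin(\pi z)=\pi/\cosh(\pi\tau)$, and recover $f$ by a contour shift collecting residues at the zeros of $M(\tau)-\lambda$ and at the pole produced by the $\log$ singularity of $\psi$. One small correction: your claim that $\tilde g(\pm\theta-2ik)$ decays geometrically in $k$ is too optimistic for the $\mathbf{R}f$ contribution---the paper's careful estimate on the Mellin transform of this piece gives only $\lesssim |z|\,e^{-\pi|\Imag z|/2}$, hence linear growth in $k$ along the pole sequence, which still suffices for the polynomial boundedness of $c_k,d_k$ and is precisely the ``analytic bookkeeping'' you flag as the main obstacle.
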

\begin{remark}
Note that for each $k$, the functions $s \mapsto (s-1/2)^{2k-1/2 \pm i\theta}$  are generalized eigenvectors of the Carleman
operator $\mathbf{C}$ belonging to the eigenvalue $\lambda$, $0 < \lambda < \pi$; see Lemma \ref{lem:mellincarleman}. The
constant function $s \mapsto cd$ is not such an eigenfunction, and its appearance in \eqref{eq:thmrep} will allow us to derive
a contradiction in the case that $c \neq 0$.

 It is also possible to treat the case $\lambda = \pi$ with the methods below, although we choose not to since we do not need it.
 Carrying out the details, one obtains for $\lambda = \pi$ a decomposition of $f$ in terms of the eigenfunctions
 $s \mapsto (s-1/2)^{2k-1/2}$ and  $s \mapsto (s-1/2)^{2k-1/2} \log(s-1/2)$ of the Carleman operator.
\end{remark}

To simplify the computations and to align our proof with the classical representation of the Carleman operator, we will in this
section shift everything to $\mathbb{R}_+=(0,\infty)$, and prove	 Theorem~\ref{thm:rep} on this ray. Shifting the representation
back to $(1/2,+\infty)$ will then give \eqref{eq:thmrep}. This means that we consider $\mathcal{H}_0^2$ the space of Dirichlet series
\[
f(s) = \sum_{n=2}^\infty \frac{a_n}{\sqrt{n}}\, n^{-s},
\]
with coefficients $(a_n)_{n\geq2} \in \ell^2$, and the operator
\[\mathbf{H} f(s) = \int_0^\infty f(w)\left(\zeta(s+w+1)-1\right)\,ds.\]
We let $\{x\}$ denote the fractional part of $x$, and use the well-known formula
\[
\zeta(s+1)-1 = \frac{1}{s}-(s+1)\int_1^\infty \{x\} x^{-(s+1)}\,\frac{dx}{x} =
 \frac{1}{s}-(s+1) \int_0^\infty \{e^x\}e^{-(s+1)x}\,dx =: \frac{1}{s}-K(s).
 \]
The function $1/s$ is the kernel of Carleman's operator, defined on $L^2(\mathbb{R}_+)$ as
\[\mathbf{C}f(s) = \int_0^\infty \frac{f(w)}{s+w}\,dw.\]
We will let $\mathbf{K}$ denote the similarly defined integral operator with kernel $(s,w) \mapsto K(s+w)$, so that
$\mathbf{H} = \mathbf{C}-\mathbf{K}$. For $0 < \lambda < \pi$ and $f$	 in $\mathcal{H}_0^2$, we consider the
equation $(\mathbf{H}-\lambda)f = c \psi$, where $\psi$ denotes
$$\psi(s) = \sum_{n=2}^\infty \frac{1}{n \log n} n^{-s}.$$
(Note that this function also differs by a $1/2$ shift from the actual symbol appearing in Theorem~\ref{thm:rep}.)
It is convenient to rewrite this equation in the form
\begin{equation} \label{eq:eigeneq}
(\mathbf{C}-\lambda)f = \mathbf{K}f+c \psi.
\end{equation}
To analyze the equation \eqref{eq:eigeneq}, we will use the Mellin transform, which is defined by
\begin{equation} \label{eq:mellinxform}
	\mathcal{M}f(z) = \int_0^\infty s^{z}f(s)\,\frac{ds}{s}.
\end{equation}
By the Cauchy--Schwarz inequality and Corollary~\ref{cor:L2emb}, taking into account the rapid decay
near infinity, we obtain that if $f$ is in $\mathcal{H}_0^2$, then the integral \eqref{eq:mellinxform} converges absolutely
when $\Real z>1/2$. This means that the function $\mathcal{M}f(z)$ is analytic in (at least) $\Real z>1/2$. Our first
goals are thus to compute $\mathcal{M}\mathbf{C}f$ and $\mathcal{M}\mathbf{K}f$ for $f $ in $\mathcal{H}_0^2$,
as well as the special transform $\mathcal{M} \psi$.

\begin{lemma} \label{lem:mellincarleman}
	Suppose that $f$ is  in $\mathcal{H}_0^2$. Then
	\begin{equation} \label{eq:mellincarleman}
		(\mathcal{M}\mathbf{C}f)(z) = \frac{\pi}{\sin{(\pi z)}} \,(\mathcal{M}f)(z),
	\end{equation}
	has a meromorphic continuation to $\Real z>1/2$.
	\begin{proof}
		When $\Real z<1$, $z \not \in \mathbb{Z}$ and $w>0$, we have
		\[\int_0^\infty \frac{s^{z-1}}{s+w}\,ds = \frac{\pi}{\sin{(\pi z)}}w^{z-1},\]
		which is the same integral \eqref{homo} which was used in the proof of Theorem~\ref{mpnorm}. By this formula and
Fubini's theorem, we obtain \eqref{eq:mellincarleman} in the strip $1/2<\Real z<1$. However, the right hand
side of \eqref{eq:mellincarleman} has a meromorphic continuation to the domain $\Real z>1/2$.
	\end{proof}
\end{lemma}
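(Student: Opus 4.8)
The plan is to compute $\mathcal{M}\mathbf{C}f$ by unwinding the two integrals and interchanging their order. Writing
\[
(\mathcal{M}\mathbf{C}f)(z) = \int_0^\infty s^{z-1}\left(\int_0^\infty \frac{f(w)}{s+w}\,dw\right)ds,
\]
I would first restrict to the strip $1/2 < \Real z < 1$ and apply Fubini's theorem to move the $s$-integral inside, obtaining
\[
(\mathcal{M}\mathbf{C}f)(z) = \int_0^\infty f(w)\left(\int_0^\infty \frac{s^{z-1}}{s+w}\,ds\right)dw.
\]
The inner integral is evaluated by the substitution $s = wx$, which turns it into $w^{z-1}\int_0^\infty x^{z-1}(1+x)^{-1}\,dx$; this is the complex analogue of the integral in \eqref{homo}, equal to $w^{z-1}\pi/\sin(\pi z)$ whenever $0 < \Real z < 1$. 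Recognizing that $\int_0^\infty f(w)\,w^{z-1}\,dw = (\mathcal{M}f)(z)$ then produces the identity \eqref{eq:mellincarleman} throughout the strip.

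The delicate point is the justification of Fubini, which forces the choice of strip. For $0 < \Real z < 1$ the inner $s$-integral of $s^{\Real z - 1}/(s+w)$ converges---at the origin because $\Real z > 0$ and at infinity because $\Real z < 1$---and equals $w^{\Real z - 1}\pi/\sin(\pi \Real z)$. The remaining iterated integral $\int_0^\infty |f(w)|\,w^{\Real z - 1}\,dw$ then converges precisely when $\Real z > 1/2$, which is exactly the range of absolute convergence of $\mathcal{M}f$ already obtained (via the Cauchy--Schwarz inequality and Corollary~\ref{cor:L2emb}) in the discussion preceding the lemma. Intersecting the two requirements yields the admissible strip $1/2 < \Real z < 1$, on which all of the above manipulations are licensed and the identity holds.

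It remains to produce the meromorphic continuation, and here the identity \eqref{eq:mellincarleman} does the work for free. The factor $(\mathcal{M}f)(z)$ is analytic on the half-plane $\Real z > 1/2$ by the remarks preceding the lemma, while $\pi/\sin(\pi z)$ is meromorphic on all of $\mathbb{C}$ with simple poles at the integers. Their product is therefore meromorphic on $\Real z > 1/2$ and agrees with $\mathcal{M}\mathbf{C}f$ on the strip $1/2 < \Real z < 1$, so it furnishes the desired continuation, with possible poles only at $z = 1, 2, 3, \dots$. The main obstacle is thus not the continuation itself, which is immediate, but the convergence bookkeeping needed both to fix the strip $1/2 < \Real z < 1$ and to legitimize the interchange of integrals.
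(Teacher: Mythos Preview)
Your proposal is correct and follows essentially the same approach as the paper: compute the inner integral $\int_0^\infty s^{z-1}(s+w)^{-1}\,ds = \pi w^{z-1}/\sin(\pi z)$, apply Fubini on the strip $1/2<\Real z<1$, and then observe that the right-hand side is meromorphic on $\Real z>1/2$. Your write-up is more explicit than the paper's about the Fubini justification and about where the possible poles lie, but the underlying argument is the same.
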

\begin{remark}
Note that the choice of $\theta$ is such that $\pi/\sin \left( \pi (i\theta + 1/2) \right) = \lambda$. This motivates the appearance
of the functions $s \mapsto s^{2k-1/2 \pm i\theta}$ in \eqref{eq:thmrep} as generalized eigenfunctions to the Carleman
operator. Compare with the remark following Theorem \ref{thm:rep}.
\end{remark}
\begin{lemma} \label{lem:mellinK}
	Let $f$ be a function	in $ \mathcal{H}_0^2$. Then $(\mathcal{M}\mathbf{K}f)(z)$ has a meromorphic continuation to
$\Real z<1$ with simple poles at the non-positive integers. If\,  $\Real z \le 1-\varepsilon$ and $|\Imag z|\ge \varepsilon$,
for some positive $\varepsilon$, then 
	\begin{equation} \label{eq:growth}
		(\mathcal{M}\mathbf{K}f)(z) \lesssim \|f\|_{\mathcal{H}_0^2} |z| e^{-\pi|\Imag z|/2}.
	\end{equation}
	\begin{proof}
		We begin by computing
\begin{equation} \label{eq:Kfcomp}
\mathbf{K}f(s) = \int_0^\infty f(w)K(s+w)\,dw = \sum_{n=2}^\infty \frac{a_n}{\sqrt{n}\log{n}}\left(\alpha_n(s)+\beta_n(s)\right),
\end{equation}
where
\begin{align*}
\alpha_n(s) &= \int_0^\infty A_n(x)\,se^{-sx}\,xdx,\qquad &A_n(x)& = \frac{1}{1+x/\log{n}}\,\frac{\{e^x\}}{x}e^{-x}, \\
\beta_n(s) &= \int_0^\infty 2B_n(x)\,e^{-sx}\,xdx,\qquad &B_n(x)& =
\frac{1}{2}\left(\frac{1}{(1+x/\log{n})^2}+\frac{1}{1+x/\log{n}}\right)\,\frac{\{e^x\}}{x}e^{-x}.
\end{align*}
We will only need the estimates $A_n(x),\,B_n(x) \leq e^{-x}$, which imply that $\mathbf{K}f(s)$ is analytic in $\Real s>-1$,
since $(a_n/(\sqrt{n}\log{n}))_{n\geq2}$ is in $\ell^1$. We apply the Mellin transform of \eqref{eq:Kfcomp}, initially
with $0 < \Real z < 1$, obtaining		
\[
(\mathcal{M}\mathbf{K}f)(z) =\sum_{n=2}^{\infty} \frac{a_n}{\sqrt{n}\log{n}}\left(\Gamma(1+z)\widetilde{\alpha}_n(z)
+\Gamma(z)\widetilde{\beta}_n(z)\right),
\]
where $\Gamma$ denotes the Gamma function and
\[
\widetilde{\alpha}_n(z) = \int_0^\infty A_n(x)\,x^{1-z}\,\frac{dx}{x} \qquad \text{and} \qquad
\widetilde{\beta}_n(z) = \int_0^\infty 2B_n(x)\,x^{2-z}\,\frac{dx}{x}.
\]
When $\Real z<1$, we use the estimates $A_n(x),\,B_n(x)\leq e^{-x}$ along with the triangle inequality to obtain
\[
|\widetilde{\alpha}_n(z)| \leq \Gamma(1-\Real z) \qquad \text{and} \qquad
|\widetilde{\beta}_n(z)| \leq 2 \Gamma(2-\Real z).
\]
Hence $\mathcal{M}\mathbf{K}f$ has a meromorphic continuation to $\Real z<1$, with simple poles at the poles of $\Gamma(z)$.
Moreover, by the Cauchy--Schwarz inequality, we obtain that
\[
|(\mathcal{M}\mathbf{K}f)(z)| \lesssim \|f\|_{\mathcal{H}_0^2}\big(|\Gamma(1+z)|\Gamma(1-\Real z)+
2|\Gamma(z)|\Gamma(2-\Real z)\big).
\]
When $|\Imag z|\ge \varepsilon$, we may use the functional equation and reflection formula for the Gamma function, and
estimate further that
\begin{equation}\label{eq:bound1}
|(\mathcal{M}\mathbf{K}f)(z)| \lesssim \|f\|_{\mathcal{H}_0^2}\,\big(\left|\Gamma(1+z)\right|\Gamma(1-\Real z)\big)
=\|f\|_{\mathcal{H}_0^2}\, \frac{\pi}{|\sin{(\pi z)}|}\,\frac{\Gamma(1-\Real z)}{|\Gamma(-z)|}.
\end{equation}
By our restriction that $\Real z \le 1-\varepsilon$ and $|\Imag z|\ge \varepsilon$, Stirling's
formula (see \cite[p. 525]{MV})
now yields that
\[
\frac{\Gamma(1-\Real z)}{|\Gamma(-z)|}	\lesssim
\frac{|1-\Real z|^{1/2-\Real z}}{|z|^{-\Real z-1/2}} e^{\pi |\Imag z|/2} \lesssim |z|		
e^{\pi |\Imag z|/2},
\]
where the implicit constants depend only on $\varepsilon$.
Hence returning to \eqref{eq:bound1}, we find that
\[|(\mathcal{M}\mathbf{K}f)(z)| \lesssim \|f\|_{\mathcal{H}_0^2} |z| e^{-\pi|\Imag z|/2} \]
as claimed.
\end{proof}
\end{lemma}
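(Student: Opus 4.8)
The plan is to produce an explicit formula for $(\mathcal{M}\mathbf{K}f)(z)$ and then to read off both the meromorphic continuation and the growth bound directly from it. The first step is to unwind the kernel: inserting the representation $K(s)=(s+1)\int_0^\infty\{e^x\}e^{-(s+1)x}\,dx$ into $\mathbf{K}f(s)=\int_0^\infty f(w)K(s+w)\,dw$ and interchanging the order of integration, which is justified since $0\le\{e^x\}<1$ while $f$ is square-integrable near $0$ (Corollary~\ref{cor:L2emb}) and decays rapidly at $+\infty$. Writing $s+w+1=(s+1)+w$ and expanding $f(w)=\sum_{n\ge2}a_n n^{-1/2}e^{-w\log n}$, the inner $w$-integral collapses to $\sum_{n\ge2}a_n n^{-1/2}\bigl[(s+1)(x+\log n)^{-1}+(x+\log n)^{-2}\bigr]$, so that $\mathbf{K}f(s)$ is exhibited as a series in $n$ of Laplace-type integrals in $x$, each carrying the weight $\{e^x\}e^{-(s+1)x}$.

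I would then apply the Mellin transform term by term. By Fubini and the identity $\int_0^\infty s^{z}e^{-sx}\,ds=\Gamma(z+1)x^{-z-1}$, each summand contributes a factor $\Gamma(z+1)$ or $\Gamma(z)$ multiplied by an auxiliary Mellin integral in $x$ of the corresponding kernel piece. After factoring out $n^{-1/2}(\log n)^{-1}$, these kernel pieces are dominated pointwise by $e^{-x}$ uniformly in $n$; the only delicate point is the behaviour near $x=0$, where $\{e^x\}/x\to1$ keeps the integrands bounded. This domination yields the uniform bounds $\Gamma(1-\Real z)$ and $2\Gamma(2-\Real z)$ for the two families of auxiliary integrals when $\Real z<1$. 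Since both $(a_n)$ and $(n^{-1/2}(\log n)^{-1})_{n\ge2}$ lie in $\ell^2$, the Cauchy--Schwarz inequality makes the $n$-series converge absolutely and locally uniformly, so the only singularities are those inherited from the Gamma factors $\Gamma(z+1)$ and $\Gamma(z)$. These are precisely the simple poles at the non-positive integers, which establishes the meromorphic continuation to $\Real z<1$.

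It remains to prove the growth bound \eqref{eq:growth}, and this is the step I expect to be the main obstacle. The estimates above give $|(\mathcal{M}\mathbf{K}f)(z)|\lesssim\|f\|_{\mathcal{H}_0^2}\bigl(|\Gamma(1+z)|\Gamma(1-\Real z)+2|\Gamma(z)|\Gamma(2-\Real z)\bigr)$; since $\Gamma(1+z)=z\Gamma(z)$ and $\Gamma(2-\Real z)=(1-\Real z)\Gamma(1-\Real z)$, the second term is bounded by a constant depending only on $\varepsilon$ times the first when $|\Imag z|\ge\varepsilon$, so it suffices to control $|\Gamma(1+z)|\Gamma(1-\Real z)$. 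Here I would invoke the reflection formula to write this as $\frac{\pi}{|\sin(\pi z)|}\cdot\frac{\Gamma(1-\Real z)}{|\Gamma(-z)|}$ and then appeal to Stirling's formula: the factor $\pi/|\sin(\pi z)|\simeq e^{-\pi|\Imag z|}$ on $|\Imag z|\ge\varepsilon$ supplies the exponential decay, while Stirling bounds the ratio $\Gamma(1-\Real z)/|\Gamma(-z)|$ by a term of size $|z|e^{\pi|\Imag z|/2}$, so the product is $\lesssim|z|e^{-\pi|\Imag z|/2}$ as required. The genuine difficulty is that the individual Gamma factors blow up as $\Real z\to-\infty$, so the argument must be arranged so that the polynomial factor $|z|$ and the net exponential decay survive uniformly across the whole region $\Real z\le1-\varepsilon$, $|\Imag z|\ge\varepsilon$, with constants depending only on $\varepsilon$; this is exactly what the careful Stirling asymptotics of the ratio $\Gamma(1-\Real z)/|\Gamma(-z)|$ accomplish.
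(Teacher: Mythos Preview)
Your proposal is correct and follows essentially the same route as the paper's proof: expand $\mathbf{K}f$ as a series in $n$ of Laplace-type integrals against kernels dominated by $e^{-x}$, take the Mellin transform termwise to produce the factors $\Gamma(z+1)$ and $\Gamma(z)$ times auxiliary integrals bounded by $\Gamma(1-\Real z)$ and $2\Gamma(2-\Real z)$, apply Cauchy--Schwarz over $n$, and then handle the growth bound via the reflection formula and Stirling exactly as you describe. The only cosmetic difference is that the paper splits $s+w+1=s+(w+1)$ rather than $(s+1)+w$, which slightly changes which pieces land in which auxiliary integral but has no effect on the argument.
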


\begin{lemma} \label{lem:Melling}
	For $\Real z > 0$, we have
	\begin{equation} \label{eq:Melling}
		\mathcal{M}\psi(z) = -\frac{1}{z^2} + \sum_{n=0}^\infty \frac{b_n}{z+n} + E_\psi(z),
	\end{equation}
	where $|b_n|$ decays super-exponentially, and $E_\psi(z)$ is an entire function that, for every real number $R$, is bounded
in the half-plane $\Real z < R$. Hence $\mathcal{M}\psi(z)$ has a meromorphic continuation to $\mathbb{C}$ with
a double pole at $z=0$ and simple poles at the negative integers.
	\begin{proof}
	Set $h(s): = \psi(s) - \log s$. Since $h'(s) = \zeta(s+1) - 1 - 1/s$, $h(s) = \sum_{n\geq0} b_n s^n$ is an entire function.
Note now that for $\Real z > 0$ we have
	$$\int_0^1 s^{z-1} \log s \, ds = -\frac{1}{z^2},$$
	while
	$$\int_0^1 s^{z-1}h(s) \, ds = \sum_{n=0}^\infty \frac{b_n}{z+n}.$$
	We finish the proof by setting $E_\psi(z) := \int_1^\infty s^{z-1} \psi(s) \, ds$.
	\end{proof}
\end{lemma}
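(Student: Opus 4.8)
The plan is to establish \eqref{eq:Melling} by splitting the Mellin integral at $s=1$, writing $\mathcal{M}\psi(z) = \int_0^1 s^{z-1}\psi(s)\,ds + \int_1^\infty s^{z-1}\psi(s)\,ds$, where the second integral will be $E_\psi(z)$ and the first will supply the singular terms. The decisive preliminary step is to separate off the logarithmic singularity of $\psi$ at the origin. Setting $h(s) := \psi(s) - \log s$, I would first verify that $h$ extends from $(0,\infty)$ to an \emph{entire} function. This is seen by examining $h'$, which is a combination of $\zeta(s+1)$ and $1/s$; the only candidate singularity, at $s=0$, is removed precisely because $\zeta(w)-(w-1)^{-1}$ is entire, the fact already invoked in the introduction. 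Since $h'$ is entire, so is $h$, and thus $h(s) = \sum_{n\geq0} b_n s^n$ with infinite radius of convergence. In particular $\limsup_n |b_n|^{1/n}=0$, which is the meaning of the super-exponential decay of $|b_n|$ and is exactly the estimate needed to make the residue series in \eqref{eq:Melling} converge.

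With $h$ in hand, on the interval $(0,1)$ I would substitute $\psi(s) = \log s + h(s)$ and evaluate the two resulting integrals. The logarithmic part gives $\int_0^1 s^{z-1}\log s\,ds = -1/z^2$ for $\Real z>0$, which one obtains by differentiating the identity $\int_0^1 s^{z-1}\,ds = 1/z$ with respect to $z$; this single term is responsible for the double pole at the origin. For the regular part, uniform convergence of $\sum_n b_n s^n$ on the compact interval $[0,1]$ justifies integrating term by term, and $\int_0^1 s^{z+n-1}\,ds = 1/(z+n)$ produces $\sum_{n\geq0} b_n/(z+n)$. The super-exponential decay of $b_n$ then guarantees that this series converges locally uniformly on $\mathbb{C}\setminus\{0,-1,-2,\dots\}$ and so defines a meromorphic function with simple poles of residue $b_n$ at each $z=-n$.

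Finally I would set $E_\psi(z) := \int_1^\infty s^{z-1}\psi(s)\,ds$ and record its two required properties. Since $\psi(s)=\sum_{n\geq2}(n\log n)^{-1}n^{-s}$ decays like $2^{-s}$ as $s\to+\infty$, this integral converges absolutely for every $z\in\mathbb{C}$, locally uniformly in $z$, and hence defines an entire function (by differentiation under the integral sign, or Morera's theorem). For the uniform bound, note that when $\Real z < R$ and $s\geq1$ one has $|s^{z-1}| = s^{\Real z-1}\leq s^{R-1}$, whence $|E_\psi(z)| \leq \int_1^\infty s^{R-1}|\psi(s)|\,ds < \infty$ uniformly in that half-plane. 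Assembling the three contributions yields \eqref{eq:Melling}: the $-1/z^2$ term dominates the subordinate $b_0/z$ to give a double pole at $z=0$, the remaining terms of the residue series give simple poles at the negative integers, and $E_\psi$ adds nothing singular.

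I expect the main obstacle to be the first step: confirming that the remainder $h$ is genuinely entire, rather than merely analytic in a half-plane, and that its Taylor coefficients decay super-exponentially. Everything downstream --- the convergence of the residue series and the clean identification of the poles --- rests on this single fact, which in turn reduces to the entirety of $\zeta(w)-(w-1)^{-1}$. Once the logarithmic singularity at the origin has been correctly isolated, the remainder of the argument is the routine splitting of the Mellin integral together with the two elementary evaluations above.
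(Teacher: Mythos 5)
Your proposal is correct and follows the paper's own proof essentially step for step: the same splitting of the Mellin integral at $s=1$, the same subtraction of the logarithmic singularity with entirety of the remainder $h$ reduced to that of $\zeta(w)-(w-1)^{-1}$, the same term-by-term evaluation $\int_0^1 s^{z+n-1}\,ds=1/(z+n)$, and the same definition $E_\psi(z)=\int_1^\infty s^{z-1}\psi(s)\,ds$, with your added verifications (super-exponential decay of $b_n$, local uniform convergence, the bound on $E_\psi$ in half-planes) being routine. You even inherit the paper's harmless sign slip: since $\psi'(s)=-(\zeta(s+1)-1)$, it is $\psi(s)+\log s$ (not $\psi(s)-\log s$) that is entire, so the double-pole term in \eqref{eq:Melling} should read $+1/z^2$; this affects neither the location nor the order of the poles, nor anything downstream.
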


\begin{proof}[Proof of Theorem~\ref{thm:rep}]
	Suppose that $0 < \lambda < \pi$. Transforming the equation \eqref{eq:eigeneq} by the Mellin transform and solving
for $\mathcal{M}f$, we obtain
	\begin{equation}\label{eq:Mfcomp}
\mathcal{M}f(z) = \left( \frac{\pi}{\sin (\pi z)} - \lambda\right)^{-1}\left(\mathcal{M}\mathbf{K}f(z)+c\mathcal{M}\psi(z)\right).
	\end{equation}
	Initially this formula is only valid for $1/2 < \Real z < 1$, but we note that the left hand side can be analytically
continued to $\Real z >1/2$ and the right hand side can be meromorphically continued to $\Real z < 1$.
	
	The inverse Mellin transform is given by
	\begin{equation} \label{eq:inverseMellin}
		\mathcal{M}^{-1}h(s) = \frac{1}{2\pi i}\int_{\kappa-i\infty}^{\kappa+i\infty} s^{-z}h(z)\,dz
	\end{equation}
for a suitable $\kappa$. For \eqref{eq:Mfcomp} the Mellin inversion theorem allows us to choose $\kappa \in (1/2,1)$. Our expressions for $\mathcal{M}\mathbf{K}f$ and $\mathcal{M}\psi$ show that the right-hand side of \eqref{eq:Mfcomp} is meromorphic in $\Real z<1$ with (possible) simple poles at the solutions of $\sin(\pi z) = \pi/\lambda$ as well as at $z = 0$. Note here that the factor in front of $\mathcal{M}\mathbf{K}f(z)+c\mathcal{M}\psi(z)$ has simple zeroes at the integers.
Note also that there actually are no poles in $\Real z>1/2$, since $\mathcal{M}f(z)$ is analytic there. Hence we are left with
the pole $z=0$ (if $c\neq0$) and those given by
	\[1-\frac{\lambda}{\pi}\sin{(\pi z)} = 0, \qquad \Real z\leq 1/2 \qquad\Longleftrightarrow\qquad z = \pm i\theta+(2k+1/2),\]
	where $k=0,\,-1,\,-2,\,\ldots$
		
	We now compute \eqref{eq:inverseMellin} for $h = \mathcal{M}f$ and $\kappa = 2/3$ by the method of residues.
	Let $J_n=[\theta]+n$ and form the rectangular contour $\mathcal{J}_n$ with corners in $2/3\pm iJ_n$ and $-(2J_n+3/2)\pm iJ_n$,
traversed counter-clockwise. Using	\eqref{eq:growth} and \eqref{eq:Melling}, straightforward estimates show that for $0 < s < 1$
we have
\[
\lim_{n \to \infty} \int_{\mathcal{J}_n}s^{-z}\mathcal{M}f(z)\,dz = \frac{1}{2\pi i}\int_{2/3-i\infty}^{2/3+i\infty} s^{-z}
\mathcal{M}f(z)\,dz.
\]
Evaluating the left-hand side by residues, we obtain
	\[f(s) = cd + \sum_{k=0}^\infty s^{2k-1/2}\left( c_{k}s^{-i\theta} + d_{k}s^{i\theta}\right), \qquad 0 < s < 1,\]
	where $cd$, $c_k$, and $d_k$ are obtained as the residues of the right-hand side of \eqref{eq:Mfcomp} at $z = 0$,
$z = i\theta -2k + 1/2$ and	 $z = -i\theta -2k + 1/2$, respectively. In fact, it is clear that $c_{k}$ and $d_{k}$ grow at
most polynomially in $k$, as seen from the estimates of Lemma~\ref{lem:mellinK} and Lemma~\ref{lem:Melling}.
	
	It remains to show that $c_0 = d_0 = 0$. However, either of them assuming a non-zero value contradicts the fact that $f$
is in $ L^2(\mathbb{R}_+)$. Moving back to $(1/2,+\infty)$, we obtain \eqref{eq:thmrep}.
	
The final statement follows from the fact that $f'(s)$ is bounded in $1/2<s<1$ due to \eqref{eq:thmrep}, the
contribution from $s>1$ is easily estimated by the fact that $f$ is a Dirichlet series in $\mathcal{H}^2_0$. 
\end{proof}

Note that in the excluded case $\lambda = \pi$ one may use the same argument, but the representation of $f$ is different
because all poles of the right-hand side of \eqref{eq:Mfcomp} except $z = 0$ are double. We also note that a more careful
analysis would show that the sequences $(c_k)_{k\geq0}$ and $(d_k)_{k\geq0}$ are in fact bounded, but since we do not
need this, we have not made an effort to optimize this part of the theorem.

\section{The spectrum of the multiplicative Hilbert matrix}\label{spectrum}
In this section we establish that $\mathbf{H}$ has the purely continuous spectrum $[0, \pi]$ on $\mathcal{H}_0^2$. Our
argument is based on a commutation relation between $\mathbf{H}$ and the operator $\mathbf{D}$ of differentiation,
$\mathbf{D}f(s)=f'(s)$. To establish this relation, we observe that
\[ \mathbf{D}\mathbf{H}f(s) = \int_{1/2}^{\infty} f(w) \mathbf{D} (\zeta(w+s)-1) dw, \qquad s > 1/2. \]
Supposing that $f'$ is integrable on the segment $(1/2,1)$, we get that
\begin{align*}
	\mathbf{D}\mathbf{H}f(s)&=-f(1/2) (\zeta(s+1/2)-1) - \int_{1/2}^\infty f'(w) (\zeta(w+s)-1) dw \\
	&=-f(1/2) (\zeta(s+1/2)-1) - \mathbf{H}\mathbf{D}f(s), \qquad s > 1/2,
\end{align*}
where we have defined $f(1/2)=f(1)-\int_{1/2}^1 f'(w) dw$. Thus, $\mathbf{D}$ and $\mathbf{H}$ anti-commute up to
an (unbounded) rank-one term.  This observation is crucial for the characterization of the spectrum of $\mathbf{H}$.

To demonstrate that $\mathbf{H}$ has the purely continuous spectrum $[0, \pi]$, it suffices to show that $\mathbf{H}$ has
no eigenvalues and that $H - \lambda$ does not have full range for $\lambda$ in $(0,\pi)$. Indeed, $\mathbf{H}$ is a
positive operator with norm $\pi$, and so it follows that its spectrum is $[0, \pi]$. Since any $\lambda$ in the spectrum of a
self-adjoint operator must either be an eigenvalue or part of the continuous spectrum, we can conclude that $\mathbf{H}$ has
purely continuous spectrum. With this in mind we now finish the proof of Theorem \ref{basicbound}.

\begin{theorem} \label{thm:pp}
The operator $\mathbf{H} : \mathcal{H}_0^2 \to \mathcal{H}_0^2$ has no point spectrum. Furthermore,
if $f $ in $ \mathcal{H}_0^2$ solves the equation $(\mathbf{H}-\lambda)f = c \psi$, where $c$ is a complex number and
$$\psi(s) = \sum_{n=2}^\infty \frac{1}{\sqrt{n} \log n} n^{-s},$$
then $f = c = 0$. In particular, the spectrum of\/ $\mathbf{H}$ is $[0,\pi]$ and purely continuous.
\end{theorem}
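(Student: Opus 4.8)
The plan is to leverage the commutation relation between $\mathbf{D}$ and $\mathbf{H}$ displayed just above the statement, together with the regularity $f' \in L^2(1/2,\infty)$ furnished by Theorem~\ref{thm:rep} and the embedding $\mathcal{H}_0^2 \hookrightarrow L^2(1/2,\infty)$ of Corollary~\ref{cor:L2emb}. The whole content lies in the range $0 < \lambda < \pi$: I will show that any $f \in \mathcal{H}_0^2$ solving $(\mathbf{H}-\lambda)f = c\psi$ is trivial with $c = 0$. Specializing to $c = 0$ rules out eigenvalues in $(0,\pi)$, while $c \neq 0$ shows $\psi$ is not in the range of $\mathbf{H}-\lambda$, so that $\mathbf{H}-\lambda$ fails to be surjective. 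The endpoints I would dispose of directly: strict positivity of $\mathbf{H}$ excludes the eigenvalue $0$, and the strict inequality $\|\mathbf{H}f\|_{\mathcal{H}_0^2} < \pi\|f\|_{\mathcal{H}_0^2}$ recorded after Theorem~\ref{mpnorm} excludes $\pi$; values off $[0,\pi]$ are excluded since $0 \le \mathbf{H} \le \pi$.

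For the main case I would start from $\mathbf{H}f = \lambda f + c\psi$ and differentiate, using $\psi'(s) = -(\zeta(s+1/2)-1)$ and the commutation relation $\mathbf{D}\mathbf{H}f = -f(1/2)(\zeta(s+1/2)-1) - \mathbf{H}\mathbf{D}f$; note that Theorem~\ref{thm:rep} guarantees $f' \in L^2(1/2,\infty)$, hence $f'$ is integrable near $1/2$, so the relation applies. A short rearrangement then yields the identity
\[(\mathbf{H}+\lambda)f' = \bigl(c - f(1/2)\bigr)\bigl(\zeta(s+1/2)-1\bigr).\]
The crucial observation is a membership dichotomy at $s=1/2$: since $f' \in L^2(1/2,\infty)$ and $\mathbf{H}f' \in \mathcal{H}_0^2 \hookrightarrow L^2(1/2,\infty)$ by Corollary~\ref{cor:L2emb}, the left-hand side lies in $L^2(1/2,\infty)$; but $\zeta(s+1/2)-1$ has a simple pole at $s=1/2$ and is therefore \emph{not} square integrable near that point. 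Hence the scalar factor must vanish, i.e.\ $c = f(1/2)$, and consequently $(\mathbf{H}+\lambda)f' = 0$.

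To finish, I would read $\mathbf{H}f' = -\lambda f'$ as an identity in $L^2(1/2,\infty)$; since its left-hand side lies in $\mathcal{H}_0^2$ and $\lambda \neq 0$, it follows that $f' \in \mathcal{H}_0^2$ and $f'$ is an eigenvector of $\mathbf{H}$ for the eigenvalue $-\lambda$. Positivity of $\mathbf{H}$, equivalently $\langle \mathbf{H}f',f'\rangle = \int_{1/2}^\infty |f'|^2$, forces $f' = 0$, whence $f$ is a constant Dirichlet series in $\mathcal{H}_0^2$, i.e.\ $f \equiv 0$; then $c\psi = 0$ and $\psi \neq 0$ give $c = 0$. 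Combining the absence of eigenvalues with the failure of surjectivity of $\mathbf{H}-\lambda$ throughout $(0,\pi)$, self-adjointness gives $(0,\pi) \subseteq \Spec(\mathbf{H})$, and since the spectrum is closed and contained in $[0,\pi]$ we conclude $\Spec(\mathbf{H}) = [0,\pi]$ with no point spectrum, as claimed.

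I expect the main obstacle to be the bookkeeping of function spaces, namely justifying that each manipulation takes place in $L^2(1/2,\infty)$ rather than in $\mathcal{H}_0^2$. In particular, neither $f'$ nor $\zeta(s+1/2)-1$ need lie in $\mathcal{H}_0^2$, so the extended operator $\mathbf{H}\colon L^2(1/2,\infty)\to\mathcal{H}_0^2$ of Corollary~\ref{cor:L2emb} must be used throughout, and one must confirm that it is consistent with the commutation relation derived from the integral representation. The square-integrability dichotomy at the singular point $s=1/2$, which is precisely where Theorem~\ref{thm:rep} does its work, is the linchpin of the argument.
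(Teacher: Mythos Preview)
Your proposal is correct and follows essentially the same route as the paper's proof: invoke Theorem~\ref{thm:rep} to obtain $f'\in L^2(1/2,\infty)$, differentiate $(\mathbf{H}-\lambda)f=c\psi$ via the commutation relation, use the $L^2$-versus-pole dichotomy at $s=1/2$ to kill the scalar in front of $\zeta(s+1/2)-1$, and then use positivity of $\mathbf{H}$ to conclude $f'\equiv 0$. Your version is in fact slightly cleaner in that you do not need the identification $f(1/2)=cd$ from the series expansion in Theorem~\ref{thm:rep}; the membership argument alone forces $c=f(1/2)$, and you also spell out the final step $f'\equiv 0 \Rightarrow f\equiv 0 \Rightarrow c=0$ explicitly.
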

\begin{proof}
We have already proved that $\lambda = 0$ and $\lambda = \pi$ are not eigenvalues, since we have shown in Section \ref{bounded}
that $\mathbf{H}$ is a strictly positive operator for which $\| \textbf{H} f \|_{\Ht_0} < \pi \| f \|_{\Ht_0}$, $f \neq 0$. It is hence
sufficient to verify the second part of Theorem \ref{thm:pp}, since it shows  simultaneously that no $\lambda$ in
$(0, \pi)$ is an eigenvalue, and that $\mathbf{H} - \lambda$ does not have full range.

Accordingly, we suppose that $f$ in $ \mathcal{H}_0^2$ satisfies $(\mathbf{H}-\lambda)f = c \psi$. By Theorem \ref{thm:rep}, we
have the series representation \eqref{eq:thmrep}. In particular $f'$ is square-integrable on $(1/2, \infty)$ and $f(1/2) = cd$. But
noting that $\psi'(s) = \zeta(s+1/2) - 1$ and using the commutation relation of $\mathbf{H}$ and $\mathbf{D}$, we then	get that
$$-(\mathbf{H}+\lambda)f' - cd(\zeta(s+1/2) -1) = c(\zeta(s+1/2) - 1).$$
Since $f'$ is in $L^2(1/2,\infty)$ we use Corollary~\ref{cor:L2emb} to conclude that $\mathbf{H}f'$ is also
in $L^2(1/2,\infty)$. Since $\zeta(s+1/2)$ has a pole of order $1$ at  $s = 1/2$, it follows that $d = -1$. Hence, we have
obtained that
\begin{equation} \label{eq:poseig}
(\mathbf{H}+\lambda)f' = 0.
\end{equation}
From \eqref{eq:poseig} and Corollary~\ref{cor:L2emb}, we get that $f'$ is $ \mathcal{H}_0^2$. But since $\mathbf{H}$ is a
positive operator on $\mathcal{H}^2_0$, applying \eqref{eq:poseig} again, we find that $f' \equiv 0$.
\end{proof}

\section{Failure of boundedness of $\textbf{H}$ on $\Hp_0$ when $p\neq 2$}\label{lp}
We follow \cite{B}	and define ${\Hp}$ as the completion of the set of Dirichlet polynomials $P(s)=\sum_{n\leq N} a_n n^{-s}$ with
respect to the norm
\[\Vert P\Vert_{{\Hp}}:=\left(\lim_{T\to \infty} \frac{1}{T}\int_{0}^T \vert P(it)\vert^{p}dt\right)^{1/p}. \]
The Dirichlet series of a function $f$	in ${\Hp}$ converges uniformly in each half-plane 	
$\Real s>1/2+\varepsilon$, $\varepsilon>0$, so $f$ is analytic in the half-plane $\Real s>1/2$ (see \cite{B,QQ}). The space $\Hp_0$ is the subspace of $\Hp$ consisting of Dirichlet series of the form $\sum_{n\geq2} a_n n^{-s}$, which means
that series in $\Hp_0$ vanish at $+\infty$.

\begin{theorem}\label{pth}
	$\mathbf{H}$ does not act boundedly on $\Hp_0$ for $1\leq p<\infty$, $p\neq2$.
\end{theorem}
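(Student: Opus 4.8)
The plan is to reduce the question to the behaviour of the Riesz (Szegő) projection $P : L^p(\T^\infty) \to H^p(\T^\infty)$ and to exploit its unboundedness for $p \neq 2$. The starting point is that, for Dirichlet polynomials $f = \sum a_m m^{-s}$ and $g = \sum b_n n^{-s}$, the identity $\langle \mathbf{H}f, g\rangle = \int_{1/2}^\infty f(w)\overline{g(w)}\,dw = \sum_{m,n} a_m\overline{b_n}(\sqrt{mn}\log(mn))^{-1}$ is purely algebraic and therefore holds irrespective of $p$. Consequently $\mathbf{H}$ extends to a bounded operator on $\Hp_0$ if and only if the Hankel bilinear form $(f,g)\mapsto \langle \mathbf{H}f,g\rangle$ is bounded on $\Hp_0 \times \mathcal{H}^q_0$, where $q = p/(p-1)$ and I use that $(\Hp_0)^\ast = \mathcal{H}^q_0$ under the coefficient pairing (for $p = 1$ one pairs against $\Hi$ instead). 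Via Bohr's correspondence and Helson's viewpoint this form is the small Hankel operator with analytic symbol $\psi$: writing $\Psi$ for the Bohr lift of $\psi$ and $\widetilde f(z) := f(\bar z)$, one has $\mathbf{H}f = P(\Psi\,\widetilde f)$ on $H^p(\T^\infty)$, where $f \mapsto \widetilde f$ is an isometry of $L^p$. Thus, beyond the size of the fixed symbol, the only obstruction to boundedness is the projection $P$ itself.

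First I would record the quantitative source of the difficulty: the projection onto the non-negative frequencies, restricted to the subtorus $\T^N$ spanned by the first $N$ primes, is the $N$-fold tensor power of the one-dimensional Riesz projection, so its norm on $L^p(\T^N)$ grows at least geometrically in $N$, like $(\sin(\pi/p))^{-N}$, precisely when $p \neq 2$; this is the mechanism behind the non-complementation result of \cite{E}. The plan is to transfer this growth into a lower bound for the Hankel form. Concretely, I would work on $\T^N$, i.e.\ with Dirichlet polynomials supported on $N$-smooth integers, and build test pairs $f_N \in H^p(\T^N)$, $g_N \in H^q(\T^N)$ as tensor products of one-variable functions that are near-extremal for the one-dimensional Riesz projection, arranging that $|\langle \mathbf{H}f_N, g_N\rangle|$ grows like a fixed power of $(\sin(\pi/p))^{-N}$ while $\|f_N\|_{\Hp_0}\|g_N\|_{\mathcal{H}^q_0}$ grows strictly slower, forcing $\|\mathbf{H}\|_{\Hp_0 \to \Hp_0} = \infty$. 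A sanity check confirming that genuine cancellation is needed: the diagonal choice $f = g$ cannot work, since log-convexity of the $L^p$-norms gives $\|f\|_{\Hp_0}\|f\|_{\mathcal{H}^q_0} \ge \|f\|_{\Ht_0}^2$, while Corollary~\ref{cor:L2emb} bounds $\int_{1/2}^\infty |f|^2\,dw = \langle \mathbf{H}f, f\rangle$ by $\pi\|f\|_{\Ht_0}^2$, so the ratio stays $\le \pi$. The conjugate-function phenomenon, i.e.\ oscillation between $f$ and $g$, is therefore essential.

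The hard part will be decoupling the exponential growth of $P$ from two features of $\mathbf{H}$ that have no analogue for a bare tensor projection. The first is that the symbol coefficients $(\sqrt{k}\log k)^{-1}$ decay, so the form weights frequencies unequally; to counter this I would localise the construction to a dyadic window of frequencies $k \asymp R$ on which $(\sqrt{k}\log k)^{-1}$ is essentially constant, and check that the Riesz-projection growth survives the restriction. The second, more serious, difficulty is that the Hankel kernel $(\sqrt{mn}\log(mn))^{-1}$ depends on the \emph{product} $mn$ and hence couples all coordinates at once, unlike the clean tensor structure of $P$; the crux is to choose the coordinatewise factors of $f_N$ and $g_N$ so that this multiplicative coupling still reproduces, up to constants, the $N$-fold conjugate-function estimate. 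Finally, the endpoint $p = 1$ (where $(\Ho_0)^\ast = \Hi$ is only a one-sided pairing) I would treat by the same test functions, estimating $\|\mathbf{H}f_N\|_{\Ho_0}$ directly from below rather than through a dual pairing, or by a limiting/closed-graph argument from the range $1 < p < 2$.
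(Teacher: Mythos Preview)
Your framework contains a genuine gap that your own sanity check almost uncovers. The assertion $(\Hp_0)^\ast = \mathcal{H}^q_0$ under the coefficient pairing is false for $p\neq 2$; this duality is \emph{equivalent} to the boundedness of the Riesz projection on $L^q(\T^\infty)$, which you are simultaneously trying to exploit as unbounded. The implication you actually need does survive: if the form blows up on $\Hp_0\times\mathcal{H}^q_0$ then $\mathbf H$ is unbounded on $\Hp_0$, simply because $|\langle \mathbf H f,g\rangle|\le \|\mathbf H f\|_p\|g\|_q$ by H\"older. But the mechanism you propose cannot make the form blow up. Once $g$ is analytic, your own representation $\mathbf H f = P(\Psi\widetilde f)$ collapses the projection:
\[
\langle \mathbf H f, g\rangle_{L^2(\T^\infty)} = \langle P(\Psi\widetilde f), g\rangle = \langle \Psi\widetilde f, g\rangle,
\]
so the geometric growth $(\sin(\pi/p))^{-N}$ of $P$ on $\T^N$ has nowhere to enter. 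Running the paper's test functions confirms this quantitatively: with the analytic $g=P_+G$ in place of $G$, the exponent of $(2\alpha-1)$ in the final ratio becomes $1/p+1/q-1=0$, and the ratio stays bounded as $\alpha\to 1/2^+$. Whether the bilinear form is in fact bounded on $\Hp_0\times\mathcal{H}^q_0$ ties into the open embedding problem of Section~\ref{nehari}, so this route is at best unresolved.

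The paper's proof differs precisely at this point. It pairs $\mathbf H f$ against $G\in L^q(\T^\infty)$ (the actual dual, via Hahn--Banach) and builds $G$ as an Euler product with \emph{anti-analytic} parts,
\[
G(z)=\prod_{j}\Big(1+\tfrac{2}{q}\,p_j^{-\alpha}\big(z_j+\tfrac{2-q}{q}\,\overline{z_j}\big)\Big).
\]
Lemma~\ref{simple} is the one-variable computation showing that this particular combination of $z_j$ and $\overline{z_j}$ minimises the $L^q$ norm for a fixed analytic part; taking the product over primes, $\|G\|_q$ is a strictly smaller power of $(2\alpha-1)^{-1}$ than $\|P_+G\|_q$ exactly when $pq>4$, i.e.\ when $p\neq 2$, and letting $\alpha\to 1/2^+$ yields the blow-up. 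The ``conjugate-function phenomenon'' you correctly flag as essential is thus realised not through oscillation between two analytic test functions but through anti-analytic terms in the dual variable; your proposed $g_N\in H^q(\T^N)$ cannot carry it.
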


The proof of this theorem requires us to associate $\Hp$ with $H^p(\T^\infty)$. This means that we need to invoke the so-called
Bohr lift, which we now recall (see \cite{HLS, QQ} for further details). For every positive integer $n$, the fundamental theorem
of arithmetic allows the prime factorization
\[n = \prod_{j=1}^{\pi(n)} p_j^{\kappa_j},\]
which associates $n$ to the finite non-negative multi-index $\kappa(n) = (\kappa_1,\,\kappa_2,\,\kappa_3,\,\ldots\,)$. The Bohr
lift of the Dirichlet series
$f(s)=\sum_{n\geq1} a_n n^{-s}$ is the power series
\begin{equation} \label{eq:bohrlift}
	\mathcal{B}f(z) = \sum_{n=1}^\infty a_n z^{\kappa(n)},
\end{equation}
where $z = (z_1,\,z_2,\,z_3,\,\ldots\,)$. Hence \eqref{eq:bohrlift} is a power series in infinitely many variables, but each term
contains only a finite number of these variables. An important example is the Bohr lift of the Riemann zeta function. Let $f_w(s) = \zeta(s+w)$ for $\Real(w)>1/2$. Using the Euler product of the Riemann zeta function, we find that
\begin{equation} \label{eq:zetalift}
	\mathcal{B}f_w(z) = \sum_{n=1}^\infty n^{-w}z^{\kappa(n)} = \prod_{j=1}^\infty \left(1-p_j^{-w}z_j\right)^{-1}.
\end{equation} Indeed, any Dirichlet series with an Euler product has a Bohr lift that separates the variables in the same way.

Under the Bohr lift, $\mathcal{H}^p$ corresponds to the Hardy space $H^p(\T^\infty)$, which we view as a subspace of
$L^p(\mathbb{T}^\infty)$. This means that $\mathcal{B}$ is a multiplicative and isometric map from $\Hp$ onto $H^p(\T^\infty)$.
We refer to \cite{B, CG, HLS, QQ} for the details, mentioning only a few important facts. Functions in $H^p(\mathbb{T}^\infty)$
are analytic at the points $\xi \in \mathbb{D}^\infty \cap \ell^2$. Indeed the reproducing kernel at $\xi$ is given by
\[K_\xi(z) = \prod_{j=1}^\infty \left(1-\overline{\xi_j}z_j\right)^{-1},\]
compare with \eqref{eq:zetalift}. The Haar measure of the compact abelian group $\mathbb{T}^\infty$ is simply the product of the
normalized Lebesgue measures for each variable. In particular, $H^p(\T^d)$ is a natural subspace of $H^p(\T^\infty)$. We denote the
orthogonal projection (Riesz projection) from $L^2(\T^{\infty})$ onto $H^2(\T^{\infty})$ by $P_+$. Even though $H^p(\T^\infty)$
is uncomplemented in $L^p(\T^\infty)$ when $p\neq 2$ \cite{E}, we can still identify its dual with the Riesz projection of $L^q(\T^\infty)$
 for $1/p+1/q=1$ using the Hahn--Banach theorem, $(H^p(\mathbb{T^\infty}))^\ast = P_+ L^q(\mathbb{T^\infty})$, $1\leq p < \infty$. 

We require the following lemma which is established by direct computation. Here and in what follows, the $L^p$ norm with
respect to normalized Lebesgue measure on $\T$ (or $\T^\infty$) is denoted by $\| \cdot \|_p$.

\begin{lemma}\label{simple}
Let $\lambda$ be a real parameter and suppose that $0<\varepsilon(1+|\lambda|)<1/4$, $1\le p< \infty$. Then
\[
\|1+\varepsilon (z+\lambda \overline{z}) \|_{p}^p=1+\frac{p}{4} \left[(p-1)(1+\lambda)^2+(1-\lambda)^2\right]\varepsilon^2
 +O(\varepsilon^3).
 \]
The norm is minimal when $\lambda=(2-p)/p$:
\[
\left\|1+\varepsilon \left(z+\frac{(2-p)}{p} \overline{z}\right) \right\|_{p}^p= 1+(p-1)\varepsilon^2 +O(\varepsilon^3).
\]
\end{lemma}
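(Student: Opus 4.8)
The plan is to reduce everything to a single trigonometric integral and then expand the integrand in powers of $\varepsilon$. Since $1+\varepsilon(z+\lambda\overline{z})$ depends on only one coordinate, integration against the product Haar measure on $\T^\infty$ collapses to integration over a single circle, so we may write $z=e^{i\phi}$. Splitting $z+\lambda\overline{z}=(1+\lambda)\cos\phi+i(1-\lambda)\sin\phi$ into real and imaginary parts, I would first record the identity
\[
\left|1+\varepsilon(z+\lambda\overline{z})\right|^2 = 1 + 2\varepsilon(1+\lambda)\cos\phi + \varepsilon^2\left[(1+\lambda)^2\cos^2\phi+(1-\lambda)^2\sin^2\phi\right] =: 1+w(\phi).
\]
The hypothesis $\varepsilon(1+|\lambda|)<1/4$ guarantees $|1+\varepsilon(z+\lambda\overline{z})|\geq 1-\varepsilon(1+|\lambda|)>3/4$ and $\leq 5/4$, so that $w(\phi)$ stays in a fixed compact subinterval of $(-1,\infty)$, and moreover $w(\phi)=O(\varepsilon)$ uniformly in $\phi$.

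Since $\|1+\varepsilon(z+\lambda\overline{z})\|_p^p=\frac{1}{2\pi}\int_0^{2\pi}(1+w(\phi))^{p/2}\,d\phi$, the next step is the binomial expansion
\[
(1+w)^{p/2} = 1 + \frac{p}{2}w + \frac{p(p-2)}{8}w^2 + R(w),
\]
where by the uniform bounds above and Taylor's theorem the remainder satisfies $|R(w)|=O(\varepsilon^3)$, with implied constant depending on $p$ and $\lambda$. Substituting the explicit $w(\phi)$ and integrating term by term, I would use $\frac{1}{2\pi}\int_0^{2\pi}\cos\phi\,d\phi=0$ to kill the $O(\varepsilon)$ contribution, together with $\frac{1}{2\pi}\int_0^{2\pi}\cos^2\phi\,d\phi=\frac{1}{2\pi}\int_0^{2\pi}\sin^2\phi\,d\phi=\tfrac12$. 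At order $\varepsilon^2$ the term $\frac{p}{2}w$ contributes $\frac{p}{4}\big[(1+\lambda)^2+(1-\lambda)^2\big]$, while $\frac{p(p-2)}{8}w^2$ contributes $\frac{p(p-2)}{4}(1+\lambda)^2$ (only the square of the $O(\varepsilon)$ part of $w$ survives). Summing yields the coefficient $\frac{p}{4}\big[(p-1)(1+\lambda)^2+(1-\lambda)^2\big]$, which is the first claim.

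For the second assertion I would minimize the quadratic $\lambda\mapsto(p-1)(1+\lambda)^2+(1-\lambda)^2$; setting its derivative $2(p-1)(1+\lambda)-2(1-\lambda)$ equal to zero gives $\lambda=(2-p)/p$. Substituting $1+\lambda=2/p$ and $1-\lambda=2(p-1)/p$ reduces the bracket to $4(p-1)/p$, so the coefficient collapses to $p-1$, as stated.

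The only point requiring genuine care, and the one I would present most carefully, is the uniformity of $R(w)$ in $\phi$: because $p$ need not be an even integer, $(1+t)^{p/2}$ is merely smooth rather than polynomial, so one must know that $w(\phi)$ is confined to a compact subinterval of $(-1,\infty)$ on which the third derivative of $(1+t)^{p/2}$ is bounded. This is precisely what the assumption $\varepsilon(1+|\lambda|)<1/4$ furnishes, and it is what legitimizes integrating the error term to obtain the stated $O(\varepsilon^3)$.
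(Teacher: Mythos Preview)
Your proof is correct and follows essentially the same approach as the paper: write $z=e^{i\theta}$, compute $|1+\varepsilon(z+\lambda\overline z)|^2$, expand $(1+w)^{p/2}$ to second order, and integrate using the averages of $\cos\theta$, $\cos^2\theta$, $\sin^2\theta$. If anything, you are more explicit than the paper about justifying the uniform $O(\varepsilon^3)$ remainder via the hypothesis $\varepsilon(1+|\lambda|)<1/4$.
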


\begin{proof}
	We write $z=e^{i\theta}$ so that we have
	\begin{align*}
|1+\varepsilon (z+\lambda \overline{z})|^p&=\left(1+2\varepsilon(1+\lambda)\cos \theta+\varepsilon^2(1+\lambda)^2\cos^2 \theta+
\varepsilon^2 (1-\lambda)^2 \sin^2\theta\right)^{p/2} \\
&= 1+p \varepsilon(1+\lambda)\cos \theta \\
 &\qquad +\frac{p}{2}\varepsilon^2\left[1+2\Big(\frac{p}{2}-1\Big)\right](1+\lambda)^2\cos^2 \theta+
 \frac{p}{2}\varepsilon^2 (1-\lambda)^2 \sin^2\theta +O(\varepsilon^3).
\end{align*}
Integrating, we get
\[
\|1+\varepsilon (z+\lambda \overline{z}) \|_p^p=1+\frac{p}{4} \left[(p-1)(1+\lambda)^2+(1-\lambda)^2\right]\varepsilon^2
+O(\varepsilon^3). \qedhere
\]
\end{proof}
The point of the lemma is that $p^2/4>p-1$ whenever $p\neq 2$, so that (one-dimensional) Riesz projection acts expansively
on $g(z)=1+\varepsilon (z+\lambda \overline{z})$, since $\|P_+g\|_p^p = 1+(p/2)^2\varepsilon^2+O(\varepsilon^4)$. 

\begin{proof}[Proof of Theorem~\ref{pth}]
	Assume first that $p>1$. To estimate the norm of $\mathbf{H}$ on $\mathcal{H}^p_0$ from below, we will choose $G$
in $L^q(\T^\infty)$ with $1/p+1/q=1$ such that $G(0)=1$. Then using that $\zeta(s+w)-1$ is the reproducing kernel of
$\mathcal{H}^2_0$, we get for $f \in \mathcal{H}^p_0$ that
\[\langle \mathcal{B}\mathbf{H}f,G\rangle_{L^2(\mathbb{T}^\infty)}=\langle \mathbf{H}f,\mathcal{B}^{-1}P_+
G\rangle_{\mathcal{H}^2}= \int_{1/2}^\infty f(w) \overline{\left(\B^{-1} P_+G(w)-1\right)}\, dw.
\]
Specifically, we set
\[
G(z)=\prod_{j=1}^{\infty} \left(1+\frac{2}{q}p_j^{-\alpha}\left(z_j +\frac{(2-q)}{q}\overline{z_j}\right)\right)
\]
	where $\alpha>1/2$. Using Lemma~\ref{simple} we find that
\[
\|G\|_q^q = \prod_{j=1}^\infty \left\|1+\frac{2}{q}p_j^{-\alpha} \left(z_j+\frac{(2-q)}{q} \overline{z_j}\right) \right\|_q^q =
\prod_{j=1}^\infty \left(1+\frac{4(q-1)}{q^2} p_j^{-2\alpha} +O(p_j^{-3\alpha})\right).
\]
	To estimate the Euler products $\prod_{j\geq1} (1+ \lambda p_j^{-s})$ for, say $1<s<2$, we use that
\[
\prod_{j=1}^\infty (1+\lambda p_j^{-s}) = \prod_{j=1}^\infty \frac{(1+\lambda p_j^{-s})\big(1-\lambda p_j^{-s} +
O(p_j^{-2s})\big)}{(1-p_j^{-s})^\lambda} \simeq \zeta(s)^\lambda \simeq (s-1)^{-\lambda}.
\]
	We get that $\|G\|_q \simeq (2\alpha-1)^{-4/(pq^2)}$ as $\alpha \to 1/2$, since $(q-1)/q=1/p$. If $1/2 <\alpha, w < 1$, then
\[
\mathcal{B}^{-1}P_+G(w)= \prod_{j=1}^{\infty} \left(1+(2/q) p_j^{-\alpha-w}\right) \simeq (\alpha+w-1)^{-2/q}.
\]
	We now choose
\[
f(w) = \prod_{j=1}^\infty \left(1 + (2/p) p_j^{-\alpha-w}\right) -1 \simeq (\alpha+w-1)^{-2/p}.
\]
	The norm of $f$ can be computed as in the proof of Lemma~\ref{simple},
\[
\|\mathcal{B}f\|_p = \prod_{j=1}^\infty \left\|1 + (2/p)p_j^{-\alpha} z_j\right\|_p = \prod_{j=1}^\infty \left(1+ p_j^{-2\alpha} +
 O(p_j^{-4\alpha})\right)^\frac{1}{p} \simeq (2\alpha-1)^{-1/p}.
 \]
	Combining everything, we get that
\[
\frac{\left|\langle \mathcal{B}\mathbf{H}f,G\rangle_{L^2(\mathbb{T}^\infty)}\right|}{\|\mathcal{B}f\|_p\|G\|_{q}} \gtrsim
 (2\alpha-1)^{4/(pq^2)+1/p} \int_{1/2}^1 (\alpha+w-1)^{-2}\,dw \simeq (2\alpha-1)^{4/(pq^2)+1/p-1}.
 \]
	The exponent is negative if $p\neq 2$ since, in this case, $pq>4$ so letting $\alpha \to 1/2$ shows that $\mathbf{H}$ is
unbounded on $\mathcal{H}^p_0$.
	
For $p=1$, we make a minor adjustment. We can use the same $f$ (with $p=1$), but we choose
\[
G(z) = \prod_{j=1}^\infty \left(1+(1/4) p_j^{-\alpha}(z_j-\overline{z_j})\right).
\]
The point is that $z_j - \overline{z_j} = 2i\sin(\theta_j)$, if $z_j = e^{i\theta_j}$, so we get that
\[
\|G\|_\infty = \prod_{j=1}^\infty \sqrt{1 + (p_j^{-\alpha}/2)^2} = \prod_{j=1}^\infty \left(1 + (1/8)p_j^{-2\alpha} +
O(p_j^{-4\alpha})\right) \simeq (2\alpha-1)^{-1/8}.
\]
The rest of the argument works like above, the conclusion coming from that $1/8 - 1/4 < 0$.
\end{proof} 

\section{Symbols of the multiplicative Hilbert matrix}\label{nehari}
To place our discussion in context, we begin with some general considerations concerning Hankel forms, i.e., the bilinear forms
associated with (additive or multiplicative) Hankel matrices. We recall that any function $\psi$ in $H^2(\T)$ defines a Hankel
form $H_{\psi}$ by the relation
\[
H_{\psi}(f,g)=\langle fg, \psi \rangle_{L^2(\T)},
\]
which makes sense at least for polynomials $f$ and $g$. Nehari's theorem \cite{N} says that $H_{\psi}$ extends to a bounded form
on $H^2(\T)\times H^2(\T)$ if and only if $\psi=P_+ \varphi$ for a bounded function $\varphi$ in $L^\infty (\T)$. Moreover, 
$\| H_{\psi} \|= \| \varphi \|_\infty$ if we choose $\varphi$
to have minimal $L^\infty$ norm. By the Hahn-Banach theorem and the observation that
\[\langle f, \varphi \rangle_{L^2(\T)} = \langle f, P_+\varphi \rangle_{L^2(\T)},\]
at least for polynomials $f$, we note an equivalent formulation of the first part of Nehari's theorem: $H_{\psi}$ defines
a bounded form if and only if $\psi$ induces a bounded functional on $H^1(\T)$, in the sense that there exist $C > 0$ such that
for every polynomial $f$ it holds that $|\langle f, \psi \rangle_{L^2(\T)}| \leq C\|f\|_{1}$.  See for example
\cite[Section~1.4]{Nikolskii}.

In this context let us indicate an alternative proof (in fact, the original approach of Hilbert) of the fact that the usual Hilbert
matrix has norm $\pi$. Let $\varphi(\theta) = ie^{-i\theta}(\pi - \theta)$, $\theta \in [0, 2 \pi)$. Since
\[ \sum_{n=0}^{\infty} (n+1)^{-1} e^{i n\theta} = P_+ \varphi(\theta), \qquad \text{a.e. } \theta, \]
and $\|\varphi\|_{\infty} = \pi$, it follows that the Hilbert matrix has norm at most $\pi$. As noted above, it also follows that
\[\left| \sum_{n=0}^\infty c_n (n+1)^{-1} \right| \le \pi \| f\|_{1} ,\]
where $f(z)=\sum_{n\geq0} c_n z^n $. In the case of the Hilbert matrix, we have in fact the stronger inequality
\begin{equation}\label{HL} \sum_{n=0}^{\infty} |c_n| (n+1)^{-1}\le \pi \| f\|_{1}, \end{equation}
which was proved by Hardy and Littlewood \cite{HL1}.

We turn next to what is known about multiplicative Hankel forms. Every sequence
$\varrho = (\varrho_1,\,\varrho_2,$ $\,\varrho_3,\,\ldots\,)$ in $\ell^2$ defines in an obvious way a multiplicative Hankel matrix, and we associate with it the corresponding multiplicative Hankel form given by \begin{equation} \label{eq:multhankelform}
	\varrho(a,b) = \sum_{m,n=1}^\infty \varrho_{mn} a_m b_n,
\end{equation}
which initially is defined at least for finitely supported sequences $a$ and $b$ in $\ell^2$. We will now explain, using the Bohr lift,
that every multiplicative Hankel matrix can be uniquely associated with either a Hankel form on $H^2(\T^\infty)\times H^2(\T^\infty)$
or equivalently a (small) Hankel operator acting on $H^2(\T^\infty)$.

If $f$, $g$, and $\varphi$ are Dirichlet series in $\mathcal{H}^2$ with coefficients $a_n$, $b_n$, and $\overline{\varrho_n}$,
respectively, a computation shows that
\[\langle fg, \varphi \rangle_{\mathcal{H}^2} = \varrho(a,b).\]
A formal computation gives that
\[
\langle \mathcal{B}f\mathcal{B}g,\mathcal{B}\varphi\rangle_{L^2(\mathbb{T}^\infty)} = \langle fg,\varphi\rangle_{\mathcal{H}^2},
\]
allowing us to compute the multiplicative Hankel form \eqref{eq:multhankelform} on $\mathbb{T}^\infty$. This means that we
may equivalently study Hankel forms
\begin{equation} \label{eq:polyform}
	H_\Phi(FG) = \langle F G,\Phi\rangle_{L^2(\mathbb{T}^\infty)}, \qquad F, G \in H^2(\T^\infty).
\end{equation}
In our previous considerations we required that $\Phi$ be in $H^2(\T^\infty)$, but there is nothing to prevent us from
considering arbitrary symbols $\Phi$ from $L^2(\mathbb{T}^\infty)$. Hence, each $\Phi$ in $ L^2(\mathbb{T}^\infty)$
induces by \eqref{eq:polyform} a (possibly unbounded) Hankel form $H_\varphi$ on $H^2(\T^\infty) \times H^2(\T^\infty)$.
Of course, this is not a real generalization. Each form $H_\Phi$ is also induced by a symbol $\Psi $ in $H^2(\T^\infty)$; setting
$\Psi = P_+ \Phi$ we have $H_\Phi = H_\Psi$.

On the polydisc, the Hankel form $H_\Phi$ is naturally realized as a (small) Hankel operator $\mathbf{H}_\Phi$, which when
bounded acts as an operator from $H^2(\T^\infty)$ to the anti-analytic space $\overline{H^2}(\mathbb{\T}^\infty)$.
Letting $\overline{P_+}$ denote the orthogonal projection of $L^2(\mathbb{T}^\infty)$ onto $\overline{H^2}(\mathbb{\T}^\infty)$,
we have at least for polynomials $F$ in $H^2(\T^\infty)$ that
\[\mathbf{H}_\Phi F = \overline{P_+}( \overline{\Phi} F).\]

We now come to the question of to which extent Nehari's theorem remains valid in the multiplicative setting. Note first that if $\Psi$
is in $L^\infty(\mathbb{T}^\infty)$, then the corresponding multiplicative Hankel form is bounded, since
\[|H_\Psi(fg)| = |\langle fg, \Psi \rangle| \leq \|f\|_2 \, \|g\|_2\|\,\Psi\|_\infty.\]
We say that $H_\Phi$ has a bounded symbol if there exists $\Psi \in L^\infty(\mathbb{T}^\infty)$ such that
$H_\Phi = H_\Psi$. In \cite{H3}, Helson proved that every Hankel form in the Hilbert--Schmidt class $S_2$ has a bounded symbol,
but it was shown in \cite{OS} that there exist bounded multiplicative Hankel forms without bounded symbols, in sharp contrast
to the classical situation. Hence, there are in fact bounded Hankel forms $H_\Phi$ for which $f \mapsto H_\Phi(f)$ does not define
a bounded functional on $H^1(\mathbb{T}^\infty)$. For when this functional is bounded on $H^1(\mathbb{T}^\infty)$ it has, by
Hahn-Banach, a bounded extension to $L^1(\mathbb{T}^\infty)$ and therefore is given by an $L^\infty(\T^\infty)$-function
$\Psi$ which must satisfy $H_\Phi = H_\Psi$. The result of \cite{OS} was strengthened in \cite{BP}, where it was shown that
there are Hankel forms in Schatten classes $S_p$ without bounded symbols whenever $p>(1-\log \pi/\log 4)^{-1}=5.7388...$

In the opposite direction, we have the following positive result about Hankel forms with bounded symbols, reflecting that when
$\alpha(n)$ is a multiplicative function, variables separate in a natural way so that the classical Nehari theorem applies to each
of the infinitely many copies of the unit circle $\T$.
\begin{theorem} \label{thm6}
Suppose that $\varphi(s):= \sum_{n\geq1} \alpha(n) n^{-s}$ is in $\Ht$ and that $\alpha(n)$ is a multiplicative function.
If $H_{\mathcal{B} \varphi}$ is a bounded Hankel form on $H^2(\T^\infty)\times H^2(\T^\infty)$, then there exist
$\Psi \in L^\infty(\mathbb{T}^\infty)$ such that $\mathcal{B} \varphi=P_+ \Psi$. Moreover, if the function $\alpha(n)$ is
completely multiplicative, then the Hankel form $H_{\mathcal{B} \varphi}$ is always bounded on $H^2(\T^\infty)\times H^2(\T^\infty)$.
\end{theorem}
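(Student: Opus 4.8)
The plan is to exploit the product structure that multiplicativity forces on the Bohr lift, reducing the whole question to the classical Nehari theorem applied separately on each coordinate circle. Since $\alpha$ is multiplicative and not identically zero, $\alpha(1)=1$ and $\alpha(n)=\prod_j \alpha(p_j^{\kappa_j})$ for $n=\prod_j p_j^{\kappa_j}$; hence the Bohr lift splits as
\[ \mathcal{B}\varphi(z)=\prod_{j=1}^\infty \phi_j(z_j), \qquad \phi_j(w):=\sum_{k=0}^\infty \alpha(p_j^k)\,w^k. \]
Each $\phi_j$ lies in $H^2(\T)$, and $\prod_j \|\phi_j\|_2^2=\sum_{n\ge1}|\alpha(n)|^2=\|\varphi\|_{\Ht}^2<\infty$, so the product converges to $\mathcal{B}\varphi$ in $H^2(\T^\infty)$. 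I would attach to each factor the one-variable Hankel form $H_{\phi_j}(f,g)=\langle fg,\phi_j\rangle_{L^2(\T)}$ and set $\sigma_j:=\|H_{\phi_j}\|$.

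The crucial observation is that on product test functions the form $H_{\mathcal{B}\varphi}$ factors. If $F=\prod_{j\le N}F_j(z_j)$ and $G=\prod_{j\le N}G_j(z_j)$, then since $\langle 1,\phi_j\rangle_{L^2(\T)}=\overline{\alpha(1)}=1$ for every tail variable, one gets $H_{\mathcal{B}\varphi}(F,G)=\prod_{j=1}^N H_{\phi_j}(F_j,G_j)$, while $\|F\|_2=\prod_{j\le N}\|F_j\|_2$ and likewise for $G$. Choosing each pair $(F_j,G_j)$ almost extremal for $H_{\phi_j}$ and dividing by the norms then yields $\prod_{j=1}^N \sigma_j\le \|H_{\mathcal{B}\varphi}\|$. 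Testing against $F=G=1$ gives $H_{\phi_j}(1,1)=\langle 1,\phi_j\rangle=1$, so $\sigma_j\ge1$; the partial products are therefore increasing and bounded, and $\prod_{j=1}^\infty \sigma_j\le \|H_{\mathcal{B}\varphi}\|<\infty$. In particular each $H_{\phi_j}$ is bounded.

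Next I would invoke the quantitative form of Nehari's theorem on $\T$ to obtain, for each $j$, a symbol $\Psi_j\in L^\infty(\T)$ with $P_+\Psi_j=\phi_j$ and $\|\Psi_j\|_\infty=\sigma_j$, and set $\Psi:=\prod_j \Psi_j(z_j)$. Because $\widehat{\Psi_j}(0)=\widehat{\phi_j}(0)=1$, the function $\Psi_j-1$ has mean zero and $\|\Psi_j-1\|_2^2=\|\Psi_j\|_2^2-1\le \sigma_j^2-1$, a summable sequence. Since the coordinates are independent, the partial products $\prod_{j\le N}\Psi_j$ form an $L^2$-bounded martingale (here $\mathbb{E}[\Psi_j]=\widehat{\Psi_j}(0)=1$), so they converge in $L^2(\T^\infty)$ and almost everywhere; as they are uniformly bounded, with $\|\prod_{j\le N}\Psi_j\|_\infty=\prod_{j\le N}\sigma_j\le\prod_j\sigma_j$, lower semicontinuity of the $L^\infty$ norm gives $\Psi\in L^\infty(\T^\infty)$ with $\|\Psi\|_\infty\le\prod_j\sigma_j$. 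Finally, because $P_+$ factors over the independent coordinates, $P_+\big(\prod_{j\le N}\Psi_j\big)=\prod_{j\le N}\phi_j\to\mathcal{B}\varphi$ in $L^2$, and the $L^2$-continuity of $P_+$ then yields $P_+\Psi=\mathcal{B}\varphi$. This establishes the first assertion.

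For the completely multiplicative case I would simply compute the factors explicitly: here $\phi_j(w)=(1-\alpha(p_j)w)^{-1}$ is the reproducing kernel of $H^2(\T)$ at $\overline{\alpha(p_j)}$, so $H_{\phi_j}$ is the rank-one form $(f,g)\mapsto f(\overline{\alpha(p_j)})\,g(\overline{\alpha(p_j)})$ with $\sigma_j=(1-|\alpha(p_j)|^2)^{-1}$, and $\prod_j \sigma_j=\prod_j(1-|\alpha(p_j)|^2)^{-1}=\|\varphi\|_{\Ht}^2<\infty$. Running the construction of the previous paragraph then produces a bounded symbol $\Psi$ with $P_+\Psi=\mathcal{B}\varphi$, so $H_{\mathcal{B}\varphi}$ is bounded. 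I expect the genuine difficulty to lie not in any single-variable estimate but in the passage to infinitely many variables: one must both bound the product $\prod_j\sigma_j$ (which the product test functions accomplish) and verify that the infinite product of the individual Nehari symbols defines a bona fide $L^\infty(\T^\infty)$ function whose Riesz projection is $\mathcal{B}\varphi$. The independence of the coordinates — yielding both the $L^2$-convergence of the partial products and the factorization of $P_+$ — is what makes this last step go through.
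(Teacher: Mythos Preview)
Your proof is correct and follows essentially the same route as the paper: factor $\mathcal{B}\varphi$ via the Euler product forced by multiplicativity, apply the classical one-variable Nehari theorem to each factor, and take the product of the resulting symbols. Where the paper invokes a lemma from \cite{BP} for the norm factorization and the characterization of bounded point evaluations on $H^1(\T^\infty)$ from \cite{CG} for the completely multiplicative case, you supply self-contained arguments (product test functions for the bound $\prod_j\sigma_j\le\|H_{\mathcal{B}\varphi}\|$, a martingale argument for the convergence of $\prod_j\Psi_j$, and the explicit computation $\sigma_j=(1-|\alpha(p_j)|^2)^{-1}$), but the underlying strategy is identical.
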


\begin{proof}  We begin by proving the first statement. To this end, by the assumption that $\alpha(n)$ is a multiplicative function,
we may factor the symbol $\varphi(s)=\sum_{n\geq1} \alpha(n) n^{-s}$ into an Euler product,
	\[\varphi(s) = \sum_{n=1}^\infty \alpha(n) n^{-s} =
\prod_{j=1}^\infty\left(1+\sum_{k=1}^\infty \alpha\big(p_j^k\big)p_j^{-ks}\right) =: \prod_{j=1}^\infty \varphi_j(s),\]
which is absolutely convergent when $\Real s >1/2$. We observe that $\Phi_j := \mathcal{B}\varphi_j$ depends only on $z_j$,
so that $\Phi(z):=\mathcal{B}\varphi(z) = \prod_{j\geq1} \Phi_j(z_j)$. Now a version of Lemma~2 in \cite{BP} can be used
to show that
	\[\|H_\Phi\| = \prod_{j=1}^\infty \|H_{\Phi_j}\|.\]
Since $H_{\Phi_j}$ is a one variable Hankel form, we may appeal to the classical Nehari theorem \cite{N} to infer that there
is some $\Psi_j$ in $L^\infty(\mathbb{T})$ so that $H_{\Phi_j} = H_{\Psi_j}$ and moreover that $\|H_{\Phi_j}\|=\|\Psi_j\|_\infty$.
Setting $\Psi(z): = \prod_{j\geq1} \Psi_j(z_j)$, we conclude that $\|H_\Phi\| = \|\Psi\|_\infty$ and that $\Phi = P_+ \Psi$.

The second statement of	 Theorem~\ref{thm6} is just a reformulation of the fact that the set of bounded point evaluations for
$H^1(\T^\infty)$ is $\D^\infty\cap \ell^2$ \cite{CG}. Following \cite[p. 122]{CG} or the proof of the first part of the present theorem,
we may find corresponding bounded functions explicitly: For every point $z=(z_j)$ on $\T^\infty$, we set
\[\Psi(z)= \prod_{j=1}^{\infty}\frac{1}{1-|\alpha(p_j)|^2} \frac{1-\overline{\alpha(p_j)z_j}}{1-\alpha(p_j)z_j}.\]
	This is a bounded function on $\T^\infty$ because $(\alpha(p_j))_{j\geq1} \in \mathbb{D}^\infty \cap \ell^2$. One may check
that $\B^{-1} P_+ \Psi (s)$ $=\sum_{n\geq1} \alpha(n) n^{-s}$ by a direct computation or by checking that $\Phi$ represents the
functional of point evaluation at $(\alpha(p_j))_{j\geq1}$.
\end{proof}

Because of the factor $1/\log n$, the analytic symbol \eqref{symbol} of the multiplicative Hilbert matrix does not have
multiplicative coefficients, and we know from Theorem~\ref{basicbound} that it is not compact. This means that the preceding
discussion gives no answer to the following question.
\begin{question}
	Does the multiplicative Hilbert matrix have a bounded symbol?
\end{question}
Equivalently, we may ask whether we have
\begin{equation}\label{hb}
	\left| a_1 + \sum_{n=2}^{\infty} \frac{a_n}{\sqrt{n}\log n}\right|\lesssim \| f\|_{\Ho}
\end{equation}
when $f(s)=\sum_{n\geq1} a_n n^{-s}$ is in $\Ho$. We could even ask if the analogue of the Hardy--Littlewood inequality \eqref{HL}
is valid: Does \eqref{hb} hold when we put absolute values on $a_n$, or, in other words, do we have
\[|a_1|+\sum_{n=2}^\infty \frac{|a_n|}{\sqrt{n} \log n} \lesssim \left\|\sum_{n=1}^\infty a_n n^{-s} \right\|_{\Ho}? \]
To see that we could not hope for a better inequality with $\sqrt{n}\log n $ replaced by a function of slower growth, we look at the function
\[ f_N(s):=\left(\sum_{n=1}^N n^{-1/2-s}\right)^2, \]
which has $\| f_N\|_{\Ho} \sim \log N$. On the other hand, we observe that in this case,
\[ \sum_{n=2}^\infty  \frac{|a_n|}{\sqrt{n} \log n} \ge \sum_{n=2}^N \frac{d(n)}{n \log n} \sim \log N, \]
where $d(n)$ is the divisor function and the latter estimate follows by Abel's summation formula.


We observe that the left-hand side of \eqref{hb} can be written as an integral, so that another reformulation of the question is to ask
if the linear functional defined by
\begin{equation} \label{eq:functional}
Lf=\int_{1/2}^\infty f(w) dw
\end{equation}
extends to a bounded linear functional on $\Ho_0$. One of the most important open problems in the theory of Hardy spaces of
Dirichlet series is to determine whether
\begin{equation} \label{embedding}
	\int_{0}^1 |P(1/2+i t)|dt \lesssim \| P\|_{\Ho}
\end{equation}
holds for all Dirichlet polynomials.
If this were the case, then a Carleson measure argument (see \cite[Theorem~4]{OlS}) shows that then we also have
\[ \int_{1/2}^{3/2} |f(w)| dw \lesssim \|f\|_{\Ho} \]
for all $f$ in $\Ho$. The contribution from $\Real(s)\geq3/2$ can be handled with a point estimate. The easiest way (see also \cite{CG}) to deduce a sharp point estimate for $\mathcal{H}^1_0$ is through Helson's inequality \cite{H3}, which states that $\sum_{n\geq1}|a_n|^2/d(n) \leq \|f\|_{\mathcal{H}^1}^2$. For $f \in \mathcal{H}^1_0$ and $\Real(s)=\sigma>1/2$
we get that
\[
|f(s)| \leq \left(\sum_{n=2}^\infty \frac{|a_n|^2}{d(n)}\right)^\frac{1}{2}
\left(\sum_{n=2}^\infty d(n) n^{-2\sigma}\right)^\frac{1}{2} \leq \|f\|_{\mathcal{H}^1_0}
\left(\zeta(2\sigma)^2-1\right)^\frac{1}{2}.
\]
For instance, if $w\geq3/2$ then $|f(w)| \lesssim \|f\|_{\mathcal{H}^1_0} 4^{-w}$. Therefore the validity of the embedding
\eqref{embedding} in fact implies that
\[ \int_{1/2}^\infty |f(w)| dw \lesssim \|f\|_{\Ho_0}.\] 

This inequality is stronger than asking the functional of \eqref{eq:functional} to be bounded on $\Ho_0$, and hence we have
shown that \eqref{embedding} would imply that the multiplicative Hilbert matrix has a bounded symbol. Whether \eqref{embedding}
holds is an open problem that has remained unsolved for many years; we refer to \cite{SS} for a discussion of it.

\bibliographystyle{amsplain}
\bibliography{hilbert}

\end{document}